\DeclareOldFontCommand{\rm}{\normalfont\rmfamily}{\mathrm}
\DeclareOldFontCommand{\sf}{\normalfont\sffamily}{\mathsf}
\DeclareOldFontCommand{\tt}{\normalfont\ttfamily}{\mathtt}
\DeclareOldFontCommand{\bf}{\normalfont\bfseries}{\mathbf}
\DeclareOldFontCommand{\it}{\normalfont\itshape}{\mathit}
\DeclareOldFontCommand{\sl}{\normalfont\slshape}{\@nomath\sl}
\DeclareOldFontCommand{\sc}{\normalfont\scshape}{\@nomath\sc}
\tikzset{v/.style={
  circle, draw, inner sep=2pt, minimum size=6pt, fill=white}}
\theoremstyle{plain}
\newtheorem{theorem}{Theorem}[section]
\newtheorem{lemma}[theorem]{Lemma}
\newtheorem{proposition}[theorem]{Proposition}
\newtheorem{corollary}[theorem]{Corollary}
\theoremstyle{definition}
\newtheorem{definition}[theorem]{Definition}
\newtheorem{conjecture}[theorem]{Conjecture}
\newtheorem{example}[theorem]{Example}
\newtheorem{problem}[theorem]{Problem}
\theoremstyle{remark}
\newtheorem*{remark}{Remark}
\DeclareMathOperator{\Hom}{Hom}
\DeclareMathOperator{\QSym}{QSym}
\DeclareMathOperator{\wt}{wt}
\DeclareMathOperator{\LT}{LT}
\DeclareMathOperator{\type}{type}
\DeclareMathOperator{\St}{St}
\DeclareMathOperator{\jump}{jump}
\def\N{{\sf N}}
\title {Order Quasisymmetric Functions Distinguish Rooted Trees}
\author{
Takahiro Hasebe\thanks{Department of Mathematics, Hokkaido University, North 10, West 8, Kita-ku, Sapporo 060-0810, JAPAN E-mail: thasebe@math.sci.hokudai.ac.jp} and 
Shuhei Tsujie\thanks{Department of Mathematics, Hokkaido University, North 10, West 8, Kita-ku, Sapporo 060-0810, JAPAN E-mail: tsujie@math.sci.hokudai.ac.jp}
}
\date{}
\begin{document}
\maketitle
	
\begin{abstract}
Richard P. Stanley conjectured that finite trees can be distinguished by their chromatic symmetric functions.  
In this paper, we prove an analogous statement for posets: Finite rooted trees can be distinguished by their order quasisymmetric functions.  
\end{abstract}

{\footnotesize {\it Keywords:} 
rooted tree, 
$P$-partition, 
quasisymmetric function, 
overlapping shuffle, 
N-free
}

{\footnotesize {\it 2010 MSC:}
06A11; 
06A07, 
05A05, 
05C15, 


}

\section{Introduction}
Finding graph isomorphisms is known to be a hard problem. A typical mathematical way of understanding isomorphisms between objects is to compute invariants, not only for graphs but for other objects like manifolds and algebras. A well known invariant of graphs is the \textbf{chromatic polynomial} which counts the number of homomorphisms from a graph into the complete graph $K_n$ on $n$ vertices.  
More precisely, let $ G=(V_{G},E_{G}) $ and $ H=(V_{H},E_{H}) $ be simple graphs. 
A \textbf{homomorphism} from $ G $ to $ H $ is a map $ f \colon V_{G} \rightarrow V_{H} $ such that $ \{u,v\} \in E_{G} $ implies $ \{f(u),f(v)\} \in E_{H} $. 
Let $ \Hom(G,H) $ denote the set of homomorphisms from $ G $ to $ H $. A homomorphism $f \in \Hom(G,K_n)$, also called a proper coloring of $G$, is a map $f\colon V_G \to \{1,\dots, n\}$ such that any neighboring two vertices of $G$ are mapped to different numbers (colors). The cardinality of such maps $\#\Hom(G,K_n)$ turns out to be a polynomial on $n$ of degree $\#V_G$, meaning that there exists a (unique) polynomial $\chi(G,t)$ of degree $\#V_G$ such that $\chi(G,n)=\#\Hom(G,K_n)$ holds for all $n \in \mathbb{N}$. This is called the chromatic polynomial. 

Given an invariant, a basic question is to what extent the invariant distinguishes the objects of interest. 
For example, from the chromatic polynomial one can extract the number of vertices, the number of edges and the number of connected components. On the other hand, all trees on $m$ vertices have the same chromatic polynomial $t(t-1)^{m-1}$, and so, the chromatic polynomial cannot distinguish trees at all.  The converse is also known: if a finite simple graph has the chromatic polynomial $t(t-1)^{m-1}$ then the graph is a tree on $m$ vertices. These results can be found in the introductory article \cite{Read68}. 

Richard P.\ Stanley \cite{stanley1995symmetric} introduced an invariant called the \textbf{chromatic symmetric function}, which is stronger than the chromatic polynomial. For a finite simple graph $ G $, the chromatic symmetric function $ X(G,\boldsymbol{x}) $ of $ G $ is the formal power series
\begin{align*}
X(G, \boldsymbol{x}) \coloneqq \sum_{f \in \Hom(G,K_{\mathbb{N}})} \prod_{v \in V_{G}}x_{f(v)}, 
\end{align*}
where $ \boldsymbol{x} $ denotes countably many commutative indeterminates $ x_{1}, x_{2}, \dots $ and $ K_{\mathbb{N}} $ the complete graph on positive integers $ \mathbb{N} $. 
The evaluation of $ X(G,\boldsymbol{x}) $ at $ 1^{n} = (\underbrace{1, \dots, 1}_{n \text{ times}}, 0, \dots) $ coincides with $\chi(G,n)$, the chromatic polynomial evaluated at $n$. 

Stanley found two distinct simple graphs with the same chromatic symmetric function. A natural problem is then to find a class of graphs which can be distinguished by the chromatic symmetric function. Stanley posed a conjecture in this direction. 
\begin{conjecture}[Stanley {\cite{stanley1995symmetric}}] \label{StaConj}
Suppose that $ T_{1} $ and $ T_{2} $ are finite trees and $ X(T_{1}, \boldsymbol{x}) = X(T_{2}, \boldsymbol{x}) $. 
Then $ T_{1} $ and $ T_{2} $ are isomorphic. 
\end{conjecture}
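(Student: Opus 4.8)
The plan is to treat the problem as a \emph{reconstruction} question: show that $X(T,\boldsymbol{x})$ carries enough combinatorial data to rebuild a finite tree $T$ up to isomorphism. The natural starting point is Stanley's power-sum expansion
\begin{equation*}
X(G,\boldsymbol{x}) = \sum_{S \subseteq E_G} (-1)^{|S|} p_{\lambda(S)},
\end{equation*}
where $\lambda(S)$ records the vertex-sizes of the connected components of the spanning subgraph $(V_G, S)$ and $p_\lambda$ is the power-sum symmetric function. When $G = T$ is a tree on $m$ vertices, every $S$ is a forest with exactly $m - |S|$ components, so $|S| = m - \ell(\lambda(S))$ is determined by $\lambda(S)$ alone; the signs therefore factor out, and because the $p_\lambda$ are linearly independent I can read off from $X(T,\boldsymbol{x})$ the integer $a_\lambda(T) \coloneqq \#\{\,S \subseteq E_T : \lambda(S) = \lambda\,\}$ for every partition $\lambda \vdash m$. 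Thus $X(T,\boldsymbol{x})$ is equivalent to the full array of spanning-forest component-size counts $\{a_\lambda(T)\}_\lambda$, and the task becomes to reconstruct $T$ from this array.

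First I would mine the array for concrete structural invariants. The single-nontrivial-block counts $a_{(k,1^{m-k})}(T)$ equal the number of $k$-vertex subtrees of $T$, so the entire subtree-size distribution (the subtree polynomial) is recovered; in particular $a_{(2,1^{m-2})} = m-1$ reconfirms the edge count, and the size-$3$ count yields $\sum_v \binom{\deg v}{2}$, hence $\sum_v \deg_v^2$ and further constraints on the degree sequence. Combined with the disjoint-union multiplicativity $X(G \sqcup H) = X(G)\,X(H)$, one also controls how the pieces of a disconnected forest combine. These extractions already suffice to settle several structured families, such as spiders and caterpillars, where the subtree and path data pin the shape down, and they dispose of all small base cases for an induction on $m$.

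The heart of the argument is an inductive reconstruction, peeling a leaf (or splitting at a centroid) and matching the local counts of $T$ against those of its smaller pieces. The hard part will be exactly here: $X(T,\boldsymbol{x})$ is a \emph{symmetric} invariant, so the array $\{a_\lambda\}$ records only the \emph{unordered} multiset of component sizes of each spanning forest and retains no information about \emph{which} pieces are adjacent to which. Reconstructing a tree, by contrast, requires recovering precisely this attachment pattern --- the rooted, ordered gluing data by which blocks are assembled at cut vertices --- and it is not known how to recover that global gluing from the unordered component-size counts alone. Every currently available proof circumvents this by restricting to classes in which the shape is already forced by the coarse data above. The main obstacle, then, is to manufacture from $X(T,\boldsymbol{x})$ an invariant that encodes the adjacency structure of the blocks and is provably monotone under the chosen decomposition; absent such an invariant the induction cannot close, which is why the general conjecture remains open. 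This gap is exactly what motivates passing to the rooted-poset setting of the present paper, where the finer \emph{order quasisymmetric function} retains the ordered attachment data that the symmetric function discards.
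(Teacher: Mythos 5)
You were asked about Conjecture~\ref{StaConj}, which is Stanley's conjecture, and the first thing to say is that this statement is \emph{open}: the paper does not prove it and says so explicitly (``This conjecture has not been solved''). What the paper actually proves is the poset analogue, Theorem~\ref{thm: main} (generalized to $(\N,\Join)$-free posets in Theorem~\ref{thm:unique_C}), and it does so with tools that have no counterpart in your sketch: the order quasisymmetric function $\Gamma^{<}(P,\boldsymbol{x})$, its irreducibility for posets with a unique minimal or maximal element (Lemma~\ref{lem:min-max_irred}), unique factorization in $\QSym$ (Corollary~\ref{cor:UFD}), and the recursive structure of $(\N,\Join)$-free posets. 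So there is no proof of the conjecture in the paper against which your attempt could be matched.

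Your proposal, as you yourself concede in the last paragraph, is not a proof, and the gap you name is genuine. The preparatory steps are sound and known: for a tree $T$ on $m$ vertices, in the expansion $X(T,\boldsymbol{x})=\sum_{S\subseteq E_T}(-1)^{|S|}p_{\lambda(S)}$ the sign is determined by $\ell(\lambda(S))$, so the counts $a_{\lambda}(T)$ are recoverable from $X(T,\boldsymbol{x})$, and $a_{(k,1^{m-k})}(T)$ is the number of $k$-vertex subtrees (this is essentially the content of the cited work of Martin, Morin and Wagner). But the announced heart of the argument --- the inductive reconstruction by ``peeling a leaf'' or ``splitting at a centroid'' --- is never carried out; instead you explain why it cannot close with the invariants at hand, because the array $\{a_{\lambda}(T)\}$ records only unordered multisets of component sizes and forgets which components are attached to which. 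Identifying this obstruction is not the same as overcoming it: a complete proof would have to extract from $X(T,\boldsymbol{x})$ some invariant encoding the gluing data and show it behaves monotonically under the decomposition, and no such invariant is known. This missing step is exactly why the conjecture remains open, and it is also exactly the motivation the paper gives for passing to rooted trees regarded as posets, where the quasisymmetric invariant retains the order information that the symmetric one discards.
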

This conjecture is in contrast to the mentioned fact that trees on a fixed number of vertices have the same chromatic polynomial. This conjecture has not been solved, while several partial results are found in \cite{AlistePrietoZamora14,MartinMorinWagner08,OrellanaScott14,Smith:2015aa}. 

From now on, we consider a poset version of the above problem.  
For posets $ P $ and $ Q $, two kinds of homomorphisms may be considered. 
A map $ f \colon P \rightarrow Q $ is called a \textbf{strict} (resp.~\textbf{weak}) \textbf{homomorphism} if 
\begin{align*}
u < v \Rightarrow f(u) < f(v) \quad (\text{resp. }f(u) \leq f(v)). 
\end{align*}
Let $ \Hom^{<}(P,Q)\quad(\text{resp. }\Hom^{\leq}(P,Q)) $ denotes the set of strict (resp.~weak) homomorphisms from $ P $ to $ Q $. 
\begin{definition}
Let $ P $ be a finite poset. 
We define the \textbf{strict} (resp.~\textbf{weak}) \textbf{order quasisymmetric function} by 
\begin{align*}
\Gamma^{<}(P,\boldsymbol{x}) \coloneqq \sum_{f \in \Hom^{<}(P,\mathbb{N})} \prod_{v \in P}x_{f(v)} \hspace{5.5mm}\\
\left(\text{resp. }\Gamma^{\leq}(P,\boldsymbol{x}) \coloneqq \sum_{f \in \Hom^{\leq}(P,\mathbb{N})} \prod_{v \in P}x_{f(v)}\right). 
\end{align*}
\end{definition}
Note that these functions are kinds of $ (P,\omega) $-partition generating functions introduced by Stanley \cite[p.\ 81]{stanley1972ordered} and studied by Gessel \cite{gessel1984multipartite}. The evaluations $ \Gamma^{<}(P,1^{n}) $ and $ \Gamma^{\leq}(P,1^{n}) $ coincide with the order polynomials $ \overline{\Omega}(P,n) $ and $ \Omega(P,n) $ defined by Stanley \cite{stanley1970chromatic-like}. 
McNamara and Ward \cite{mcnamaca2014equality} also proved many properties of $ (P,\omega) $-partition generating functions as an invariant for finite labeled posets. In particular they gave two distinct finite posets which have the same order quasisymmetric function (see also Section \ref{open problem}). Thus $\Gamma^{<}(P,\boldsymbol{x})$ is not a complete invariant of the finite posets $P$. 
The reader is referred to \cite{gessel} for a historical survey. 

A \textbf{rooted tree} is a tree with a distinguished vertex called the \textbf{root}. 
Every vertex of a finite rooted tree $ R $ has a unique path from itself to the root. 
Hence $R$ is equipped with the natural order, i.e., $ u \leq v $ if the unique path from $ v $ to the root passes through $ u $. 
In this paper, we regard a rooted tree as a poset with respect to this order.

Our main result is that the order quasisymmetric functions distinguish finite rooted trees. 
\begin{theorem}\label{thm: main}
Let $ R_{1} $ and $ R_{2} $ be finite rooted trees. 
Then the following are equivalent.  
\begin{enumerate}[\rm(1)]
\item\label{thm: main1} $ \Gamma^{<}(R_{1},\boldsymbol{x}) = \Gamma^{<}(R_{2}, \boldsymbol{x}) $. 
\item\label{thm: main2} $ \Gamma^{\leq}(R_{1},\boldsymbol{x}) = \Gamma^{\leq}(R_{2}, \boldsymbol{x}) $. 
\item\label{thm: main3} $ R_{1} $ and $ R_{2} $ are isomorphic. 
\end{enumerate}
\end{theorem}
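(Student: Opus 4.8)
The implications from (3) to (1) and from (3) to (2) are immediate: isomorphic posets have the same sets of strict (resp.\ weak) homomorphisms into $\mathbb{N}$, hence the same generating functions. For the equivalence of (1) and (2) I would appeal to the standard $(P,\omega)$-partition theory. Expanding in the fundamental quasisymmetric functions $F_{n,S}$, both $\Gamma^{<}(P,\boldsymbol{x})$ and $\Gamma^{\leq}(P,\boldsymbol{x})$ are sums of $F$'s indexed by the linear extensions of $P$, and passing between the strict and weak versions amounts to replacing each descent set $S$ by its complement $[n-1]\setminus S$. Thus $\Gamma^{<}(P,\boldsymbol{x})=\psi\bigl(\Gamma^{\leq}(P,\boldsymbol{x})\bigr)$, where $\psi$ is the linear involution of $\QSym$ determined by $F_{n,S}\mapsto F_{n,[n-1]\setminus S}$. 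As $\psi$ is a bijection, (1) and (2) are equivalent, and it remains to prove that (2) implies (3).

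I would prove (2)$\Rightarrow$(3) by induction on the number of vertices, using the recursive structure of rooted trees. Deleting the root of $R$ leaves a forest whose components are rooted subtrees $R_{1},\dots,R_{k}$; write $R=B(R_{1},\dots,R_{k})$ for this ``attach a common root'' operation. Two structural identities drive the argument. First, the generating function is multiplicative over disjoint unions, $\Gamma^{\leq}(P\sqcup Q,\boldsymbol{x})=\Gamma^{\leq}(P,\boldsymbol{x})\,\Gamma^{\leq}(Q,\boldsymbol{x})$, and in the monomial basis $\{M_{\alpha}\}$ of $\QSym$ this product is computed by the overlapping shuffle. Second, counting weak homomorphisms of $B(R_{1},\dots,R_{k})$ directly --- the new root takes a value $m$ and forces every vertex above it to take a value $\geq m$ --- gives $\Gamma^{\leq}(R,\boldsymbol{x})=D\bigl(\prod_{i}\Gamma^{\leq}(R_{i},\boldsymbol{x})\bigr)$, where $D$ is the linear operator on $\QSym$ with $D(M_{\emptyset})=M_{(1)}$ and $D(M_{(\alpha_{1},\dots,\alpha_{k})})=M_{(1,\alpha_{1},\dots,\alpha_{k})}+M_{(\alpha_{1}+1,\alpha_{2},\dots,\alpha_{k})}$.

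Next I would invert both operations from the data $\Gamma^{\leq}(R,\boldsymbol{x})$. The operator $D$ is unitriangular with respect to prepending a part: since every part is $\geq 1$, the coefficient of $M_{(1,\beta)}$ in $D(F)$ equals the coefficient of $M_{\beta}$ in $F$, so the product $\prod_{i}\Gamma^{\leq}(R_{i},\boldsymbol{x})$ is recovered exactly from $\Gamma^{\leq}(R,\boldsymbol{x})$. It then suffices to recover the multiset $\{R_{1},\dots,R_{k}\}$ from this product; by the induction hypothesis each subtree is determined by its own generating function, so the task reduces to recovering the multiset of factors $\{\Gamma^{\leq}(R_{i},\boldsymbol{x})\}$. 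Here I would use that $\QSym$ is known to be a polynomial algebra, hence a unique factorization domain, together with the key lemma that $\Gamma^{\leq}(T,\boldsymbol{x})$ is irreducible in $\QSym$ for every connected rooted tree $T$. Granting this, $\prod_{i}\Gamma^{\leq}(R_{i},\boldsymbol{x})$ is already the factorization into irreducibles, so its multiset of irreducible factors, and hence $\{R_{i}\}$, is determined; comparing with the analogous decomposition of a second tree having the same generating function then closes the induction.

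The main obstacle is precisely this irreducibility lemma (small cases such as chains and stars can be checked by hand, but the general statement is the crux). I would attack it with a leading-term argument: fix a total order on compositions for which the overlapping shuffle product has a multiplicative leading term, so that $\LT\bigl(\Gamma^{\leq}(P)\,\Gamma^{\leq}(Q)\bigr)=\LT\bigl(\Gamma^{\leq}(P)\bigr)\cdot\LT\bigl(\Gamma^{\leq}(Q)\bigr)$, and then show that the leading composition attached to a connected rooted tree is indecomposable for this product, ruling out any nontrivial factorization. This is where the N-free nature of rooted trees should be decisive: all comparabilities are funneled through a single root, which rigidly constrains the shape (the type) of the dominant composition. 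The two points I expect to demand the most care are verifying that the chosen order is genuinely multiplicative on overlapping-shuffle leading terms, and proving that the leading composition of a tree cannot be split as a shuffle of two shorter compositions.
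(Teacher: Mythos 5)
Your overall architecture coincides with the paper's: induction on the number of vertices, multiplicativity of the generating function over disjoint unions, unique factorization in $\QSym$ (Hazewinkel's theorem), irreducibility of the generating function of each connected piece, and a leading-term argument in the overlapping shuffle algebra to prove that irreducibility. Your operator $D$ is the weak-labelling analogue of the paper's concatenation identity $\Gamma^{<}([1]\oplus P',\boldsymbol{x})=M_{(1)}\ast\Gamma^{<}(P',\boldsymbol{x})$ (Proposition \ref{lem:prod}), and your unitriangularity observation plays the role of cancelling the left factor $M_{(1)}$. The one genuine gap is the irreducibility lemma, which you correctly identify as the crux but whose proposed proof fails as stated, because you run it for $\Gamma^{\leq}$ rather than $\Gamma^{<}$.

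Concretely: with respect to the lexicographic order, the leading term of an overlapping shuffle product is governed by the coordinatewise sum $\alpha\dotplus\beta$ of the leading compositions (Propositions \ref{prop:LT1} and \ref{prop:LT2}), and the leading composition of $\Gamma(P,\omega,\boldsymbol{x})$ is the jump sequence (Proposition \ref{prop:lead_coeff}). For a \emph{natural} labelling there are no strict edges, so the leading composition of $\Gamma^{\leq}(T,\boldsymbol{x})$ is the one-part composition $(|T|)$, which splits as $(a)\dotplus(b)$ for any $a+b=|T|$; for instance $\Gamma^{\leq}([2],\boldsymbol{x})=M_{(2)}+M_{(1,1)}$ already has leading composition $(2)=(1)\dotplus(1)$. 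So ``the leading composition of a tree is indecomposable'' is false in the weak setting, and the trailing composition $(1,\dots,1)$ splits as well, so no choice of extremal term rescues the argument there. The fix is available inside your own framework: since your involution $\psi$ reduces (2) to (1), run the entire induction with $\Gamma^{<}$. Then $\Gamma^{<}(R,\boldsymbol{x})=M_{(1)}\ast\bigl(\prod_i\Gamma^{<}(R_i,\boldsymbol{x})\bigr)$, every composition occurring has first entry $1$ (the root forms its own bottom block of every stable ordered partition), whereas the leading composition of any product of two non-constant elements has first entry at least $2$; this is exactly the paper's Lemma \ref{lem:irred}, applied via Lemma \ref{lem:min-max_irred}. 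Two minor points to record when you write this up: unique factorization determines the irreducible factors only up to the units $\pm1$ of $\QSym$, which is resolved by positivity of the coefficients (or by Proposition \ref{prop:lead_coeff}); and the paper proves the statement for the strictly larger class of $(\N,\Join)$-free posets, for which the extra ingredients are Lemma \ref{conn} and Proposition \ref{prop:number_min_max} to see that each connected component still has a unique minimal or maximal element.
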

In fact we prove the result for a larger class which is characterized by the absence of full subposets ``$\N$'' and ``$\Join$'' (see Sections \ref{N-tie free posets} and \ref{proof of main theorem}).  The proof of our main theorem is based on algebraic properties of the ring of quasisymmetric functions.  
 
Our algebraic method may have potential applications to other invariants of posets, graphs or other objects. A possible future work is to try to find other invariants which can distinguish a class of graphs, posets or other objects, for instance the chromatic symmetric function and Tutte polynomial for graphs, and the W polynomial for weighted graphs \cite{NW99}. The reader can consult  \cite{EMJM11-1,EMJM11-2} for these and other invariants.  

The organization of this paper is as follows. 
In Section \ref{overlapping}, we introduce the overlapping shuffle algebra. 
In Section \ref{labeled posets}, we investigate properties of the strict order quasisymmetric functions with the theory of $ (P,\omega) $-partitions. We prove a key lemma (Lemma \ref{lem:min-max_irred}) about the irreducibility of $\Gamma^<(P,\boldsymbol{x})$. 
In Section \ref{N-tie free posets}, we introduce $ (\N,\Join) $-free posets and give their characterization. 
In Section \ref{proof of main theorem}, we give the proof of Theorem \ref{thm: main} for $ (\N,\Join) $-free posets. 
In Section \ref{open problem}, we propose some open problems. 

\section{The ring of quasisymmetric functions and the overlapping shuffle algebra}\label{overlapping}
A tuple $ (\alpha_{1}, \dots, \alpha_{\ell}) $ of positive integers is called a \textbf{composition}. 
Let $ \mathbb{N}^{\ast} $ denote the set of compositions (including the empty composition $ \varnothing $). 
A formal power series $ Q \in \mathbb{Z}[[\boldsymbol{x}]] $ is said to be \textbf{quasisymmetric} if the following conditions hold: 
\begin{enumerate}[\rm(1)]
\item The degree of $ Q $ is finite; 
\item The coefficients of the two monomials $ x_{1}^{\alpha_{1}} \cdots x_{\ell}^{\alpha_{\ell}} $ and $ x_{i_{1}}^{\alpha_{1}} \cdots x_{i_{\ell}}^{\alpha_{\ell}} $ in $Q$ are the same for any strictly increasing indices $ i_{1} < \cdots < i_{\ell} $ and any composition $ (\alpha_{1}, \dots, \alpha_{\ell}) $. 
\end{enumerate}
The set of quasisymmetric functions forms a subring of $ \mathbb{Z}[[\boldsymbol{x}]] $ (see Proposition \ref{prop:prodM} below) and is called the \textbf{ring of quasisymmetric functions}, which is denoted by $ \QSym $. 
For a composition $ \alpha = (\alpha_{1}, \dots, \alpha_{\ell}) $, we define the \textbf{monomial quasisymmetric function} $ M_{\alpha} $ by 
\begin{align*}
M_{\alpha} \coloneqq \sum_{i_{1} < \dots < i_{\ell}} x_{i_{1}}^{\alpha_{1}} \cdots x_{i_{\ell}}^{\alpha_{\ell}}, \qquad M_\varnothing :=1.  
\end{align*}
It is easy to show that the monomial quasisymmetric functions $ M_{\alpha} $ form a basis for $ \QSym $ as a module. 

For a positive integer $ \ell $, let $ [\ell] $ denote the totally ordered set $ \{1, \dots, \ell\} $ endowed with the usual order, and let $ [0] $ denote the empty set. 
\begin{proposition}[Hazewinkel {\cite[p.350]{hazewinkel1997leibniz}}, Grinberg-Reiner {\cite[Proposition 5.3]{grinberg2014hopf}}]\label{prop:prodM}
Let $ \alpha = (\alpha_{1}, \dots, \alpha_{\ell}) $ and $\beta=(\beta_{1}, \dots, \beta_{m})$ be compositions. Then 
\begin{align*}
M_{\alpha}M_{\beta} = \sum_{n=0}^{\infty} \sum_{(f,g) \in S(\ell,m,n)} M_{\wt_{\alpha, \beta}(f,g)}, 
\end{align*}
where 
\begin{align*}
S(\ell,m,n) \coloneqq \Set{(f,g) | \begin{array}{l}
f \colon [\ell] \rightarrow [n] \text{ and } g \colon [m] \rightarrow [n] \text{ are strictly} \\
\text{ order-preserving maps such that } f([\ell]) \cup g([m]) = [n]
\end{array}}, 
\end{align*}
and $ \wt_{\alpha, \beta}(f,g) = (\gamma_{1}, \dots, \gamma_{n}) $ denotes the composition of length $ n $ defined by 
\begin{align*}
\gamma_{k} \coloneqq \sum_{i \in f^{-1}(k)}\alpha_{i} + \sum_{j \in g^{-1}(k)}\beta_{j} \text{ for each } k \in [n]. 
\end{align*}
Note that $ n $ actually runs from $ \max\{\ell, m\} $ to $ \ell + m $. 
\end{proposition}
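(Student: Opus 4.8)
The plan is to compute the product $M_\alpha M_\beta$ directly from the definition of the monomial quasisymmetric functions and then reorganize the resulting sum of monomials into the claimed form. Recall that
\[
M_\alpha M_\beta = \left(\sum_{i_1 < \dots < i_\ell} x_{i_1}^{\alpha_1}\cdots x_{i_\ell}^{\alpha_\ell}\right)\left(\sum_{j_1 < \dots < j_m} x_{j_1}^{\beta_1}\cdots x_{j_m}^{\beta_m}\right).
\]
Expanding the product, each term comes from choosing a strictly increasing tuple $i_1 < \dots < i_\ell$ and a strictly increasing tuple $j_1 < \dots < j_m$, and the resulting monomial is $\prod_{p} x_{i_p}^{\alpha_p} \cdot \prod_{q} x_{j_q}^{\beta_q}$. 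The key observation is that the exponent of a given variable $x_k$ in such a monomial depends only on which of the $i_p$ and $j_q$ equal $k$. So I would organize the expansion by recording, for each term, the \emph{set} of distinct indices that occur, together with the combinatorial pattern describing how the two increasing tuples are interleaved among those distinct indices.

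First I would fix the number $n$ of distinct indices appearing in a given monomial, ranging over $\max\{\ell,m\} \le n \le \ell+m$ as noted (we need at least $\max\{\ell,m\}$ distinct values since each tuple is strictly increasing, and at most $\ell+m$ since there are that many slots). For each such $n$, I would record the placement data as a pair of maps $f\colon[\ell]\to[n]$ and $g\colon[m]\to[n]$: writing the distinct indices in increasing order as $k_1 < \dots < k_n$, let $f(p)$ be the position (in $1,\dots,n$) of $i_p$ and $g(q)$ the position of $j_q$. Since $i_1<\dots<i_\ell$, the map $f$ is strictly order-preserving, and likewise $g$; moreover every position $1,\dots,n$ is hit by at least one of $i_\bullet$ or $j_\bullet$ precisely because the $k_r$ are exactly the distinct indices that occur, which is the surjectivity condition $f([\ell])\cup g([m])=[n]$. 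Thus each expanded term determines a triple $(n,f,g)$ with $(f,g)\in S(\ell,m,n)$, and the exponent of the $r$-th distinct variable $x_{k_r}$ is exactly $\gamma_r = \sum_{p\in f^{-1}(r)}\alpha_p + \sum_{q\in g^{-1}(r)}\beta_q$, matching $\wt_{\alpha,\beta}(f,g)$.

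The main step is then to verify that grouping the expansion by $(f,g)$ reproduces the monomial quasisymmetric functions. Having fixed $(f,g)\in S(\ell,m,n)$, the terms of the expansion giving rise to this pattern are exactly those obtained by choosing the $n$ distinct indices $k_1 < \dots < k_n$ to be an arbitrary strictly increasing $n$-tuple of positive integers; for each such choice the monomial is $x_{k_1}^{\gamma_1}\cdots x_{k_n}^{\gamma_n}$, and summing over all strictly increasing $(k_1,\dots,k_n)$ yields precisely $M_{\wt_{\alpha,\beta}(f,g)}$ by definition. I expect the only genuinely delicate point to be checking that this correspondence between expanded terms and pairs $(n;f,g;k_1,\dots,k_n)$ is a \emph{bijection} — that is, that distinct data give distinct terms and every term arises exactly once. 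This amounts to confirming that from the index tuples $(i_\bullet)$ and $(j_\bullet)$ one recovers $n$, the maps $f,g$, and the increasing sequence $(k_r)$ uniquely, and conversely that any $(f,g)\in S(\ell,m,n)$ together with an increasing $(k_r)$ reconstructs valid strictly increasing tuples $i_\bullet=k_{f(\bullet)}$ and $j_\bullet=k_{g(\bullet)}$. Once this bookkeeping bijection is established, summing over all $(f,g)$ and all $n$ gives the stated formula, and as a byproduct the finiteness of the sum together with the quasisymmetry of each $M_{\wt_{\alpha,\beta}(f,g)}$ shows that $\QSym$ is closed under multiplication.
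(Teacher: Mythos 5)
Your argument is correct and complete: the bijection between pairs of strictly increasing index tuples $(i_1<\dots<i_\ell)$, $(j_1<\dots<j_m)$ and triples consisting of $n$, a pair $(f,g)\in S(\ell,m,n)$, and a strictly increasing sequence $k_1<\dots<k_n$ is exactly the right bookkeeping, and you verify both directions. Note that the paper itself gives no proof of this proposition --- it is quoted from Hazewinkel and Grinberg--Reiner --- and your direct-expansion argument is the standard one found in those references, so there is nothing to compare beyond saying it matches the canonical approach.
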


\begin{example}
We represent a pair $ (f,g) \in S(\ell,m,n) $ as boxes arranged in $ 2 $ rows and $ n $ columns with numbers placed in some boxes. 
When $ f(i)=k $, we place $ i $ in the first row and $ k $-th column. 
Additionally, if $ g(j)=k $, we place $ j $ in the second row and $ k $-th column. 
For instance, 
\begin{align*}
\begin{ytableau}
1 &   & 2 & 3 \\
 & 1 & & 2 
\end{ytableau}
\end{align*}
denotes the pair $ (f,g) \in S(3,2,4) $, where $ f(1)=1,f(2)=3,f(3)=4 $ and $g(1)=2, g(2)=4 $. 
The corresponding composition $ \wt_{\alpha, \beta}(f,g) $ is equal to $ (\alpha_{1}, \beta_{1}, \alpha_{2}, \alpha_{3}+\beta_{2}) $. 

We calculate $ M_{(\alpha_{1},\alpha_{2})}M_{(\beta_{1})} $ by using Proposition \ref{prop:prodM}. 
We have 
\begin{align*}
S(2,1,2) &= \Set{
\begin{ytableau}
1 & 2 \\
1 & 
\end{ytableau} \ , \
\begin{ytableau} 
1 & 2 \\
 & 1
\end{ytableau}
}, \\
S(2,1,3) &=\Set{
\begin{ytableau} 
1 & 2 & \\
  & & 1 
\end{ytableau} \ , \
\begin{ytableau} 
1 & & 2 \\
 & 1 & 
\end{ytableau} \ , \
\begin{ytableau} 
\  &1 & 2 \\
1 &  & 
\end{ytableau}
}. 
\end{align*}
Hence we obtain 
\begin{align*}
M_{(\alpha_{1},\alpha_{2})}M_{(\beta_{1})}
=M_{(\alpha_{1}+\beta_{1}, \alpha_{2})}
+M_{(\alpha_{1}, \alpha_{2}+\beta_{1})}
+M_{(\alpha_{1}, \alpha_{2},\beta_{1})}
+M_{(\alpha_{1}, \beta_{1}, \alpha_{2})}
+M_{(\beta_{1}, \alpha_{1}, \alpha_{2})}. 
\end{align*}
\end{example}

Let $ \mathcal{M} \coloneqq \bigoplus_{\alpha \in \mathbb{N}^{\ast}} \mathbb{Z}\alpha $. 
The map $ M \colon \mathcal{M} \rightarrow \QSym $, defined by the linear extension of $ \alpha \mapsto M_{\alpha} $, is an isomorphism of modules. 
We introduce two products on $ \mathcal{M} $. 
One is the noncommutative product $\ast$ called the \textbf{concatenation}, which is the linear extension of concatenation of compositions $ \alpha \ast \beta \coloneqq (\alpha_{1}, \dots, \alpha_{\ell}, \beta_{1}, \dots, \beta_{m}) $ for $ \alpha = (\alpha_{1}, \dots, \alpha_{\ell}), \beta=(\beta_{1}, \dots, \beta_{m}) \in \mathbb{N}^{\ast} $. 
The algebra $ (\mathcal{M}, \ast) $ is isomorphic to the free algebra $ \mathbb{Z}\langle\mathbb{N}\rangle $. 
The other is a commutative product defined so that the map $ M $ becomes an isomorphism of algebras from $ \mathcal{M} $ to $ \QSym $. 
This product is called the \textbf{overlapping shuffle product} and denoted by $ \oshuffle $. 
The algebra $ (\mathcal{M}, \oshuffle) $ is called the \textbf{overlapping shuffle algebra}. It has the unit given by the empty composition. 
The module isomorphism $ M \colon \mathcal{M} \rightarrow \QSym $ also induces the concatenation product $\ast$ on $\QSym$, which is the linear extension of $M_{\alpha} \ast M_{\beta} := M_{\alpha \ast\beta}$. The algebra $(\QSym,\ast)$ is noncommutative. 

\begin{theorem}[Hazewinkel {\cite[Theorem 8.1]{hazewinkel2001algebra}}]\label{thm:Hazewinkel}
The ring of quasisymmetric functions $ \QSym $ and the overlapping shuffle algebra $ (\mathcal{M}, \oshuffle) $ are free commutative algebras. 
\end{theorem}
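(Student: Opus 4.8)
The plan is to work with the overlapping shuffle algebra $(\mathcal{M}, \oshuffle)$ rather than $\QSym$ directly, since the module isomorphism $M$ is by construction an algebra isomorphism onto $\QSym$; it therefore suffices to prove that $(\mathcal{M}, \oshuffle)$ is a free commutative (polynomial) $\mathbb{Z}$-algebra. This algebra is graded by the weight $|\alpha| = \alpha_{1} + \cdots + \alpha_{\ell}$, which $\oshuffle$ preserves, it is connected, and each weight-graded piece is a free $\mathbb{Z}$-module of finite rank, spanned by the $2^{n-1}$ compositions of weight $n$. For such an algebra the standard route to freeness is to exhibit a family of homogeneous generators whose commutative monomials form a $\mathbb{Z}$-basis, and I would establish this via a \emph{unitriangularity} statement against the distinguished basis $\{\alpha\}_{\alpha \in \mathbb{N}^{\ast}}$.

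Before addressing integrality I would fix the combinatorial shape of the generating set. Over $\mathbb{Q}$ the result is classical: $\QSym_{\mathbb{Q}}$ is a graded connected commutative Hopf algebra over a field of characteristic zero, hence free commutative by the Leray/Hopf--Borel structure theorem, and the number of generators of a given weight equals the number of Lyndon words over the alphabet $\mathbb{N}$ of that weight. This dictates indexing the generators by Lyndon compositions. I would totally order $\mathbb{N}^{\ast}$ (first by weight, then refining by length and lexicographically), invoke the Chen--Fox--Lyndon theorem that every composition $\alpha$ factors uniquely as a nonincreasing concatenation $\alpha = w_{1} w_{2} \cdots w_{k}$ of Lyndon compositions, and attach to this factorization the candidate monomial $P_{w_{1}} \oshuffle \cdots \oshuffle P_{w_{k}}$, where $P_{w}$ is a suitable homogeneous lift of the Lyndon word $w$ (for instance $w$ itself, or a small correction of it).

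The core step is the leading-term computation: I would show that $P_{w_{1}} \oshuffle \cdots \oshuffle P_{w_{k}}$, expanded in the basis $\{\beta\}$, equals $\pm\,\alpha$ plus a $\mathbb{Z}$-combination of strictly smaller compositions $\beta < \alpha$. Granting this, the transition matrix between the candidate monomials (indexed, via Chen--Fox--Lyndon, by all of $\mathbb{N}^{\ast}$) and the standard basis $\{\alpha\}$ is unitriangular over $\mathbb{Z}$; hence the monomials themselves form a $\mathbb{Z}$-basis, which is exactly the assertion that the $P_{w}$ freely generate $(\mathcal{M}, \oshuffle)$ as a polynomial algebra. Transporting the conclusion along $M$ then gives the statement for $\QSym$.

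The decisive difficulty, and the genuinely hard part of the theorem, is proving that the leading coefficient is a unit $\pm 1$ rather than merely a nonzero rational. Here the overlapping (quasi-shuffle) nature of $\oshuffle$ is essential: the ordinary shuffle product $\shuffle$ is the top-length part of $\oshuffle$, and already for a single letter one has $a \shuffle a = 2\,aa$, so the pure shuffle algebra fails to be free over $\mathbb{Z}$ because the multiplicities created by repeated Lyndon factors are non-units. What must be shown is that the lower-length overlapping corrections precisely absorb these multiplicities, forcing the leading coefficient to collapse to $\pm 1$ in every case, including compositions whose Chen--Fox--Lyndon factorization has repeated factors. Controlling this cancellation through the combinatorics of Lyndon factorizations and overlaps is the heart of Hazewinkel's argument, and it is this integral statement that we invoke here by citation.
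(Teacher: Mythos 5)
This theorem is not proved in the paper: it is imported verbatim from Hazewinkel, and the authors explicitly remark that they only need its consequence that $\QSym$ is a unique factorization domain (Corollary \ref{cor:UFD}), and that Malvenuto's weaker result that $\QSym\otimes_{\mathbb{Z}}\mathbb{Q}$ is a free commutative algebra would suffice for their purposes. So there is no internal argument to compare yours against. Taken on its own terms, your outline is a broadly faithful description of the standard architecture of the known proof: reduce to $(\mathcal{M},\oshuffle)$ via the algebra isomorphism $M$; use the weight grading and connectedness; index candidate generators by Lyndon compositions (whose count is forced by the rational Hopf-algebra structure theorem); and establish freeness by a unitriangular change of basis against $\{\alpha\}_{\alpha\in\mathbb{N}^{\ast}}$ built from Chen--Fox--Lyndon factorizations. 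You also correctly diagnose the genuine obstruction: for the ordinary (non-overlapping) shuffle the diagonal entries of the transition matrix are the multinomial factors $\prod_i m_i!$ coming from repeated Lyndon factors, which are not units in $\mathbb{Z}$, and the whole content of the integral theorem is that the lower-length overlapping terms can be exploited to produce generators with unit leading coefficients.

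That said, your submission is not a proof. The decisive step --- constructing the lifts $P_w$ and verifying that each monomial $P_{w_1}\oshuffle\cdots\oshuffle P_{w_k}$ expands as $\pm\alpha$ plus strictly smaller terms, \emph{including} in the case of repeated Lyndon factors --- is exactly the hard part, and you explicitly defer it to citation. Everything before that point (grading, counting, the reduction to unitriangularity) is routine and would equally well ``prove'' the false statement that the ordinary shuffle algebra is free over $\mathbb{Z}$; the argument only becomes a proof once the unit-leading-coefficient claim is established, and that is precisely what is missing. Since the paper itself treats Theorem \ref{thm:Hazewinkel} as a black box, deferring to Hazewinkel is acceptable here, but you should present this as a citation with an explanatory gloss rather than as a proof.
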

Note that Hazewinkel gave explicit generators for $ (\mathcal{M}, \oshuffle) $ but we only require the following corollary in this paper. 
\begin{corollary}\label{cor:UFD}
The ring of quasisymmetric functions $ \QSym $ is a unique factorization domain. 
\end{corollary}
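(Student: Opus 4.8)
The plan is to deduce this directly from Theorem \ref{thm:Hazewinkel}. By that theorem, $\QSym$ is a free commutative $\mathbb{Z}$-algebra, so there is a set $\mathcal{G}$ of free polynomial generators together with an isomorphism of $\mathbb{Z}$-algebras $\QSym \cong \mathbb{Z}[\mathcal{G}]$, the polynomial ring over $\mathbb{Z}$ in the indeterminates $\mathcal{G}$. Since $\QSym$ is graded with each homogeneous component of finite rank but is not finitely generated as an algebra, the set $\mathcal{G}$ is countably infinite. Thus the task reduces to showing that a polynomial ring over $\mathbb{Z}$ in countably many variables is a unique factorization domain.

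First I would recall that $\mathbb{Z}$ is a UFD (indeed a PID). Then, for finitely many variables, the classical theorem of Gauss, namely that $A[x]$ is a UFD whenever $A$ is, yields by induction that $\mathbb{Z}[x_{1}, \dots, x_{n}]$ is a UFD for every $n$. This handles any element of $\mathbb{Z}[\mathcal{G}]$ that happens to involve only finitely many of the generators.

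The remaining, and only nontrivial, point is to pass from finitely many to infinitely many variables. I would write $\mathbb{Z}[\mathcal{G}] = \bigcup_{\mathcal{G}_{0}} \mathbb{Z}[\mathcal{G}_{0}]$, the directed union over the finite subsets $\mathcal{G}_{0} \subseteq \mathcal{G}$. An integral domain that is such a directed union of UFDs is itself a UFD provided each inclusion is factorially closed, in the sense that an element cannot acquire a new factorization after adjoining further variables. Concretely, if $p \in \mathbb{Z}[\mathcal{G}_{0}]$ and $p = ab$ with $a, b \in \mathbb{Z}[\mathcal{G}_{1}]$ for a larger finite set $\mathcal{G}_{1}$, then comparing degrees in the variables of $\mathcal{G}_{1} \setminus \mathcal{G}_{0}$ forces $a, b \in \mathbb{Z}[\mathcal{G}_{0}]$. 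This degree argument shows that units, irreducibles, and factorizations are all detected inside a sufficiently large finite-variable subring, so both the existence and the uniqueness of factorizations transfer to the union.

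I expect the degree-comparison step in this infinite-variable passage to be the only place requiring genuine, if routine, care: one must verify that it simultaneously guarantees that no element becomes a unit in a larger subring, that irreducibility is preserved along the chain, and that a factorization found in one subring is never refined in a larger one. Once these three points are secured, the corollary is immediate, since every factorization question about an element of $\QSym$ can be transported to a finite-variable polynomial ring where Gauss's theorem applies.
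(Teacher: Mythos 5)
Your proposal is correct and follows the same route the paper (implicitly) takes: Corollary \ref{cor:UFD} is stated as an immediate consequence of Theorem \ref{thm:Hazewinkel}, the point being precisely that a free commutative $\mathbb{Z}$-algebra is a polynomial ring $\mathbb{Z}[\mathcal{G}]$ and that such a ring, even in infinitely many variables, is a UFD. Your directed-union argument with the degree comparison (which correctly pins down that units are $\pm 1$, that irreducibility persists, and that factorizations cannot be refined in a larger subring) is the standard justification of that last fact, so you have simply supplied the details the paper omits.
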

Before Hazewinkel proved Theorem \ref{thm:Hazewinkel}, 
Malvenuto \cite[Corollary 4.19]{malvenuto1993produits} proved that the algebra $ \QSym \otimes_{\mathbb{Z}} \mathbb{Q} $ is a free commutative algebra. 
For our purpose, we may adopt this weaker theorem. 

There is a recurrence formula for the overlapping shuffle product, which is useful for computing and investigating it. 
\begin{proposition}\label{prop:osp_rec}
For non-empty compositions $ \alpha=(\alpha_{1}, \dots, \alpha_{\ell}) $ and $ \beta=(\beta_{1}, \dots, \beta_{m}) $, we have that 
\begin{align*}
\alpha \oshuffle \beta = (\alpha_{1}) \ast (\alpha^{\prime} \oshuffle \beta) + (\beta_{1}) \ast (\alpha \oshuffle \beta^{\prime}) + (\alpha_{1}+\beta_{1}) \ast (\alpha^{\prime} \oshuffle \beta^{\prime}), 
\end{align*}
where $ \alpha^{\prime}, \beta^{\prime} $ are the compositions satisfying $ \alpha=(\alpha_{1})\ast \alpha^{\prime} $ and $ \beta=(\beta_{1})\ast \beta^{\prime} $. 
\end{proposition}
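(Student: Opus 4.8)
The plan is to derive the recurrence directly from the product formula of Proposition \ref{prop:prodM}, exploiting the defining property $M_{\alpha \oshuffle \beta} = M_{\alpha} M_{\beta}$ of the overlapping shuffle. Applying that proposition, I would write
\[
\alpha \oshuffle \beta = \sum_{n} \sum_{(f,g) \in S(\ell,m,n)} \wt_{\alpha,\beta}(f,g),
\]
viewing the right-hand side as an element of $\mathcal{M}$ by identifying each composition with its basis vector. The entire argument is then a case analysis on the first column of the two-row diagram of $(f,g)$.

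First I would observe that, since $f$ and $g$ are strictly order-preserving and satisfy $f([\ell]) \cup g([m]) = [n]$, the value $1$ must lie in the image; and because $f(1) = \min f([\ell])$ and $g(1) = \min g([m])$, this forces $f(1) = 1$ or $g(1) = 1$. Hence $S(\ell,m,n)$ partitions into three disjoint subsets according to whether (A) $f(1) = 1 < g(1)$, (B) $g(1) = 1 < f(1)$, or (C) $f(1) = g(1) = 1$.

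Next, for each case I would set up a weight-preserving bijection with the pairs governing a shorter shuffle, obtained by deleting the first column and shifting every remaining column index down by $1$. In case (A), setting $\tilde{f}(i) = f(i+1) - 1$ and $\tilde{g}(j) = g(j) - 1$ yields a bijection onto $S(\ell-1, m, n-1)$; the first column contributes exactly $\alpha_{1}$ to the weight (as $f^{-1}(1) = \{1\}$ and $g^{-1}(1) = \varnothing$), while the remaining columns reproduce $\wt_{\alpha', \beta}(\tilde{f}, \tilde{g})$, so $\wt_{\alpha,\beta}(f,g) = (\alpha_{1}) \ast \wt_{\alpha', \beta}(\tilde{f},\tilde{g})$. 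Summing over case (A) gives $(\alpha_{1}) \ast (\alpha' \oshuffle \beta)$. Cases (B) and (C) are entirely analogous: (B) deletes the column containing only $g(1) = 1$ and produces $(\beta_{1}) \ast (\alpha \oshuffle \beta')$, while (C) deletes the shared first column, which contributes weight $\alpha_{1} + \beta_{1}$, and produces $(\alpha_{1}+\beta_{1}) \ast (\alpha' \oshuffle \beta')$. Adding the three contributions yields the claimed identity.

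The only delicate points, and the main source of bookkeeping, are the degenerate situations $\ell = 1$ or $m = 1$, where $\alpha'$ or $\beta'$ becomes the empty composition. I would handle these uniformly by invoking the unit property of $\oshuffle$ (so that, e.g., $\varnothing \oshuffle \beta = \beta$) and by checking that the index-shifted pairs still lie in the correct set $S$; for instance, in case (A) with $\ell = 1$ the map $\tilde{f}$ is the empty map and the surjectivity constraint forces $\tilde{g}$ to be the identity, recovering the single expected term $(\alpha_{1}) \ast \beta$. Once these boundary cases are confirmed, the three bijections account for every pair of every $S(\ell,m,n)$ exactly once, and the recurrence follows. I do not expect any genuine obstacle here beyond keeping the index shifts and the empty-composition conventions straight.
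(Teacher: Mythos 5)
Your proposal is correct and follows essentially the same route as the paper's proof: expand via Proposition \ref{prop:prodM}, partition $S(\ell,m,n)$ into the three subsets according to whether $f(1)=1$ and/or $g(1)=1$, and exhibit the index-shifting weight-preserving bijections onto the sets governing the shorter shuffles. Your explicit justification that the three cases exhaust $S(\ell,m,n)$ and your attention to the boundary cases $\ell=1$ or $m=1$ are points the paper leaves implicit, but the argument is the same.
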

\begin{proof}
From Proposition \ref{prop:prodM}, we have that 
\begin{align*}
\alpha \oshuffle \beta = \sum_{n=0}^{\infty}\sum_{(f,g) \in S(\ell,m,n)}\wt_{\alpha, \beta}(f,g). 
\end{align*}
Define subsets of $ S(\ell,m,n) $ by 
\begin{align*}
S_{1} &\coloneqq \Set{(f,g) \in S(\ell,m,n) | f(1)=1 \text{ and } g(1)>1}, \\
S_{2} &\coloneqq \Set{(f,g) \in S(\ell,m,n) | f(1)>1 \text{ and } g(1)=1}, \\
S_{3} &\coloneqq \Set{(f,g) \in S(\ell,m,n) | f(1)=1 \text{ and } g(1)=1}. 
\end{align*}
Then $ S(\ell, m ,n) = S_{1} \sqcup S_{2} \sqcup S_{3} $. 
For each pair $ (f,g) \in S_{1} $, define a pair $ (f^{\prime},g^{\prime}) \in S(\ell-1,m,n-1) $ by 
\begin{align*}
f^{\prime}(i) \coloneqq f(i+1)-1 \text{ for } i \in [\ell-1]
\text{ and } g^{\prime}(j) \coloneqq g(j)-1 \text{ for } j \in [m]. 
\end{align*}
This correspondence is bijective and we have that $ \wt_{\alpha, \beta}(f,g) = (\alpha_{1}) \ast \wt_{\alpha^{\prime}, \beta}(f^{\prime},g^{\prime}) $. 
Hence 
\begin{align*}
\sum_{n=0}^{\infty}\sum_{(f,g) \in S_{1}} \wt_{\alpha, \beta}(f,g) 
= \sum_{n=0}^{\infty}\sum_{(f^{\prime},g^{\prime}) \in S(\ell-1,m,n-1)}(\alpha_{1}) \ast \wt_{\alpha^{\prime},\beta}(f^{\prime},g^{\prime})
= (\alpha_{1}) \ast (\alpha^{\prime} \oshuffle \beta). 
\end{align*}
A similar discussion yields that 
\begin{align*}
\sum_{n=0}^{\infty}\sum_{(f,g) \in S_{2}}\wt_{\alpha,\beta}(f,g) = (\beta_{1}) \ast (\alpha \oshuffle \beta^{\prime}) 
\text{ and } 
\sum_{n=0}^{\infty}\sum_{(f,g) \in S_{3}}\wt_{\alpha,\beta}(f,g) = (\alpha_{1} + \beta_{1}) \ast (\alpha^{\prime} \oshuffle \beta^{\prime}). 
\end{align*}
Therefore the assertion holds. 
\end{proof}

\begin{example} Proposition \ref{prop:osp_rec} shows that 
\begin{align*}
(\alpha_1) \oshuffle (\beta_1) &= (\alpha_1,\beta_1) + (\beta_1,\alpha_1) + (\alpha_1+\beta_1), \\
(\alpha_1,\alpha_2) \oshuffle (\beta_1) &= (\alpha_1) \ast((\alpha_2)\oshuffle (\beta_1)) + (\beta_1) \ast (\alpha_1, \alpha_2) + (\alpha_1+\beta_1) \ast (\alpha_2) \\
 &= (\alpha_1,\alpha_2,\beta_1) + (\alpha_1,\beta_1,\alpha_2) + (\alpha_1,\alpha_2+\beta_1) + (\beta_1, \alpha_1,\alpha_2) + (\alpha_1+\beta_1, \alpha_2). 
\end{align*}
\end{example}

\begin{definition}
We introduce the \textbf{lexicographical order} $ \leq $ on $ \mathbb{N}^{\ast} $, i.e., for compositions $ \alpha = (\alpha_{1}, \dots, \alpha_{\ell}), \beta = (\beta_{1}, \dots, \beta_{m}) $, we denote by $ \alpha < \beta $ if one of the following conditions holds. 
\begin{enumerate}[\rm(1)]
\item $\alpha=\varnothing$ and $\beta \neq \varnothing$. 
\item There exists $ i \in \Set{1, \dots, \max\{\ell,m\}} $ such that $ \alpha_{1}=\beta_{1}, \dots, \alpha_{i-1}=\beta_{i-1} $ and $ \alpha_{i} < \beta_{i} $. 
\item $ \ell<m $ and $ \alpha_{1}=\beta_{1}, \dots, \alpha_{\ell} = \beta_{\ell} $. 
\end{enumerate}
\end{definition}
\begin{definition}
The \textbf{leading term} $ \LT(q) $ of an element $ q \in \mathcal{M} $ is the term which contains the greatest composition with respect to the lexicographical order.  
\end{definition}

\begin{definition}
For compositions $ \alpha=(\alpha_{1}, \dots, \alpha_{\ell}) $ and $ \beta=(\beta_{1}, \dots, \beta_{m}) $, we define a composition $ \alpha \dotplus \beta $ by the coordinatewise sum, i.e., 
\begin{align*}
\alpha \dotplus \beta \coloneqq 
\begin{cases}
(\alpha_{1}+\beta_{1}, \dots, \alpha_{\ell}+\beta_{\ell}, \beta_{\ell+1}, \dots, \beta_{m}) & \text{ if }\ell \leq m, \\
(\alpha_{1}+\beta_{1}, \dots, \alpha_{m}+\beta_{m}, \alpha_{m+1}, \dots, \alpha_{\ell}) & \text{ if }\ell \geq m. 
\end{cases}
\end{align*}
\end{definition}

\begin{proposition}\label{prop:LT1}
Let $ \alpha $ and $ \beta $ be compositions. 
Then the leading term of $ \alpha \oshuffle \beta $ is $ \alpha \dotplus \beta $. 
\end{proposition}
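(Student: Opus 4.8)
The plan is to argue by induction on the total length $\ell + m$ of the two compositions, using the recurrence of Proposition \ref{prop:osp_rec} to peel off the first entries one at a time.

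For the base case I would take one of the compositions to be empty. Since the empty composition is the unit of $\oshuffle$, we have $\alpha \oshuffle \varnothing = \alpha$, and directly from the definition $\alpha \dotplus \varnothing = \alpha$, so the claim is immediate; the case $\varnothing \oshuffle \beta$ is symmetric. For the inductive step, suppose $\alpha$ and $\beta$ are both non-empty and write $\alpha = (\alpha_{1}) \ast \alpha^{\prime}$, $\beta = (\beta_{1}) \ast \beta^{\prime}$. Proposition \ref{prop:osp_rec} expresses $\alpha \oshuffle \beta$ as a sum of three terms whose compositions begin with $\alpha_{1}$, $\beta_{1}$, and $\alpha_{1}+\beta_{1}$, respectively. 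Because all entries are positive integers, $\alpha_{1}+\beta_{1}$ strictly exceeds both $\alpha_{1}$ and $\beta_{1}$; hence by clause (2) of the lexicographic order, every composition appearing in the third summand $(\alpha_{1}+\beta_{1}) \ast (\alpha^{\prime} \oshuffle \beta^{\prime})$ is lexicographically greater than every composition in the first two summands, regardless of their tails. Since all coefficients produced by $\oshuffle$ are nonnegative (Proposition \ref{prop:prodM}), no cancellation can destroy this dominant term, and therefore $\LT(\alpha \oshuffle \beta) = \LT\big((\alpha_{1}+\beta_{1}) \ast (\alpha^{\prime} \oshuffle \beta^{\prime})\big)$.

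It then remains to commute the leading-term operation past the concatenation with the fixed prefix $(\alpha_{1}+\beta_{1})$. I would record the elementary fact that prepending a fixed composition $\delta$ to each composition preserves lexicographic comparisons, so that $\LT(\delta \ast q) = \delta \ast \LT(q)$ for any $q \in \mathcal{M}$; the verification simply traces through clauses (1)--(3), using that the first $|\delta|$ entries agree and the comparison is decided by the tails. Applying this with $\delta = (\alpha_{1}+\beta_{1})$ together with the induction hypothesis $\LT(\alpha^{\prime} \oshuffle \beta^{\prime}) = \alpha^{\prime} \dotplus \beta^{\prime}$ gives $\LT(\alpha \oshuffle \beta) = (\alpha_{1}+\beta_{1}) \ast (\alpha^{\prime} \dotplus \beta^{\prime})$, which is exactly $\alpha \dotplus \beta$ by the definition of the coordinatewise sum.

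The step I expect to require the most care is confirming that the three summands of the recurrence are cleanly separated by their first entries and that prepending a prefix is order-preserving. In particular, I would pay attention to the boundary cases where one of $\alpha^{\prime}, \beta^{\prime}$ is empty, so that clause (3) (a proper prefix is smaller) rather than clause (2) governs the relevant comparison. Everything else is routine bookkeeping on compositions.
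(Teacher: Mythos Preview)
Your proof is correct and follows essentially the same inductive argument as the paper: handle the empty case, then use the recurrence from Proposition~\ref{prop:osp_rec} to isolate the summand with prefix $(\alpha_1+\beta_1)$ as the lexicographically dominant one and invoke the induction hypothesis on $\alpha'\oshuffle\beta'$. You spell out more carefully than the paper why that summand dominates and why $\LT$ commutes with left concatenation, but the overall strategy is identical.
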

\begin{proof}
When $ \alpha $ or $ \beta $ is empty, then the assertion is obvious. 
Suppose that both $ \alpha $ and $ \beta $ are non-empty. 
By Proposition \ref{prop:osp_rec}, we have that 
\begin{align*}
\LT(\alpha \oshuffle \beta) = (\alpha_{1}+\beta_{1}) \ast \LT(\alpha^{\prime} \oshuffle \beta^{\prime}), 
\end{align*}
where $ \alpha^{\prime}, \beta^{\prime} $ denote compositions satisfying $ \alpha=(\alpha_{1}) \ast \alpha^{\prime}, \beta=(\beta_{1})\ast\beta^{\prime} $. 
Using induction, we have that $ \LT(\alpha^{\prime} \oshuffle \beta^{\prime}) = \alpha^{\prime} \dotplus \beta^{\prime} $. 
Hence $ \LT(\alpha \oshuffle \beta) = (\alpha_{1}+\beta_{1}) \ast (\alpha^{\prime} \dotplus \beta^{\prime}) = \alpha \dotplus \beta $. 
\end{proof}

\begin{proposition}\label{prop:LT2}
Let $ p,q \in \mathcal{M} $ and $ \LT(p) = c\alpha, \LT(q) = d\beta $. 
Then $ \LT(p \oshuffle q) = c d(\alpha \dotplus \beta) $. 
\end{proposition}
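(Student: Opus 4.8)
The plan is to reduce the statement to the single-composition case already settled in Proposition \ref{prop:LT1}, exploiting the bilinearity of $\oshuffle$ together with a monotonicity property of the coordinatewise sum $\dotplus$ under the lexicographical order. Write $p=\sum_i c_i\alpha_i$ and $q=\sum_j d_j\beta_j$ as integer combinations of pairwise distinct compositions with nonzero coefficients, and index them so that $\LT(p)=c\alpha$ corresponds to $i=1$ (that is, $\alpha=\alpha_1$ is the lexicographically greatest composition occurring in $p$, and $c=c_1$), and likewise $\LT(q)=d\beta$ to $j=1$. By bilinearity, $p\oshuffle q=\sum_{i,j}c_id_j(\alpha_i\oshuffle\beta_j)$, and by Proposition \ref{prop:LT1} the leading term of each $\alpha_i\oshuffle\beta_j$ is $\alpha_i\dotplus\beta_j$ with coefficient $1$; in particular every composition occurring in $\alpha_i\oshuffle\beta_j$ is lexicographically at most $\alpha_i\dotplus\beta_j$.

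The step I would isolate as a lemma is the following monotonicity: if $\gamma<\gamma'$ in the lexicographical order then $\gamma\dotplus\delta<\gamma'\dotplus\delta$ for every composition $\delta$, and symmetrically in the second argument (using that $\dotplus$ is commutative). Granting this, I would argue as follows. For every pair $(i,j)\neq(1,1)$ we have $\alpha_i\le\alpha$ and $\beta_j\le\beta$ with at least one inequality strict, so the lemma, applied once or twice and chained through $\alpha\dotplus\beta_j$, gives $\alpha_i\dotplus\beta_j<\alpha\dotplus\beta$. Consequently the composition $\alpha\dotplus\beta$ does not occur at all in any product $\alpha_i\oshuffle\beta_j$ with $(i,j)\neq(1,1)$, since all of its terms are strictly smaller; and in the single product $\alpha_1\oshuffle\beta_1=\alpha\oshuffle\beta$ it occurs exactly once, as the leading term with coefficient $1$. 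Hence no cancellation can touch $\alpha\dotplus\beta$: its coefficient in $p\oshuffle q$ is precisely $c_1d_1=cd$, which is nonzero because $\mathbb{Z}$ is an integral domain, and it is the lexicographically greatest composition with nonzero coefficient. This yields $\LT(p\oshuffle q)=cd(\alpha\dotplus\beta)$.

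It remains to prove the monotonicity lemma, which I expect to be the main obstacle since it is the only part requiring genuine work; the proof is a case analysis following the three clauses defining the lexicographical order. If $\gamma$ and $\gamma'$ first differ at a position $k$ within their common range with $\gamma_k<\gamma'_k$, then $\gamma\dotplus\delta$ and $\gamma'\dotplus\delta$ agree in positions $1,\dots,k-1$ and differ at position $k$ by the fixed positive amount $\gamma'_k-\gamma_k$ (the contribution of $\delta$ at position $k$, if any, is common to both), so clause (2) applies. The delicate case is when $\gamma$ is a proper prefix of $\gamma'$: here one must track how the tail of $\delta$ interacts with the extra entries of $\gamma'$, splitting according to whether $|\delta|\le|\gamma|$, $|\gamma|<|\delta|\le|\gamma'|$, or $|\gamma'|<|\delta|$. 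In the first subcase $\gamma\dotplus\delta$ becomes a proper prefix of $\gamma'\dotplus\delta$ and clause (3) applies; in the remaining subcases the first entry of $\gamma'$ beyond position $|\gamma|$ adds a strictly positive amount to a coordinate that in $\gamma\dotplus\delta$ comes only from $\delta$, producing a strict increase via clause (2). Verifying that these subcases are exhaustive and that the lengths align as claimed is the routine but careful part of the argument.
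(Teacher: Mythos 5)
Your argument is correct and follows essentially the same route as the paper's proof: expand $p \oshuffle q$ bilinearly, apply Proposition \ref{prop:LT1} to each $\alpha_i \oshuffle \beta_j$, and use the strict monotonicity of $\dotplus$ with respect to the lexicographical order to conclude that $\alpha \dotplus \beta$ occurs only in $\alpha \oshuffle \beta$, with coefficient $cd$ after multiplying by $c_1 d_1$. The only difference is that the paper asserts the monotonicity claim without proof, whereas you supply the (correct) case analysis.
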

\begin{proof}
Write $ p = \sum_{\gamma \leq \alpha} c_{\gamma}\gamma, q = \sum_{\delta \leq \beta} d_{\delta}\delta $, and $ p \oshuffle q = \sum_{\substack{\gamma \leq \alpha\\ \delta \leq \beta}} c_{\gamma}d_{\delta}(\gamma \oshuffle \delta) $. 
If $ \gamma < \alpha $ or $ \delta < \beta $, then $ \gamma \dotplus \delta < \alpha \dotplus \beta $. 
From Proposition \ref{prop:LT1}, the composition $ \alpha \dotplus \beta $ is the greatest composition in $ p \oshuffle q $. 
Therefore the assertion holds. 
\end{proof}

\begin{definition}
For a composition $ \alpha=(\alpha_{1}, \dots, \alpha_{\ell}) $, we define the \textbf{reverse} of $ \alpha $ by $ \alpha^{r} \coloneqq (\alpha_{\ell}, \dots, \alpha_{1}) $. 
Define an involution $ \rho $ on $ \mathcal{M} $ by the linear extension of the reverse. 
\end{definition}

\begin{proposition}\label{prop:rev_osp_comp}
The map $ \rho $ is compatible with the overlapping shuffle product, i.e., $ \rho(\alpha \oshuffle \beta) = \alpha^{r} \oshuffle \beta^{r} $. 
\end{proposition}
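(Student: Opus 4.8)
The plan is to prove the identity directly on the monomial basis, using the explicit description of the overlapping shuffle product in terms of the index sets $S(\ell,m,n)$ from Proposition \ref{prop:prodM}. Since $\rho$ is linear and $\oshuffle$ is bilinear, it suffices to verify $\rho(\alpha \oshuffle \beta) = \alpha^r \oshuffle \beta^r$ for single compositions $\alpha = (\alpha_1,\dots,\alpha_\ell)$ and $\beta = (\beta_1,\dots,\beta_m)$, which is exactly what the statement asserts. Recalling that $\alpha \oshuffle \beta = \sum_n \sum_{(f,g) \in S(\ell,m,n)} \wt_{\alpha,\beta}(f,g)$, I would construct an explicit involution on each set $S(\ell,m,n)$ that turns the weight of a pair into the reverse of the original weight, with $\alpha,\beta$ replaced by $\alpha^r,\beta^r$.

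The key step is the map $(f,g) \mapsto (\tilde f, \tilde g)$ defined by $\tilde f(i) := n+1-f(\ell+1-i)$ and $\tilde g(j) := n+1-g(m+1-j)$. First I would check that this is well defined as an element of $S(\ell,m,n)$: since $f$ is strictly increasing, $i \mapsto f(\ell+1-i)$ is strictly decreasing, so $\tilde f$ is strictly increasing, and likewise for $\tilde g$; moreover $\tilde f([\ell]) \cup \tilde g([m]) = \{n+1-k : k \in f([\ell]) \cup g([m])\} = [n]$, so surjectivity onto $[n]$ is preserved. Applying the construction twice returns $(f,g)$, so it is an involution, and in particular a bijection of $S(\ell,m,n)$ onto itself.

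The heart of the matter is the weight computation. Writing $(\gamma_1,\dots,\gamma_n) = \wt_{\alpha,\beta}(f,g)$, I would track the preimages through the reversal: from $\tilde f^{-1}(k) = \{\ell+1-i : i \in f^{-1}(n+1-k)\}$ together with $(\alpha^r)_{\ell+1-i} = \alpha_i$, the $\alpha^r$-contribution to the $k$-th entry of $\wt_{\alpha^r,\beta^r}(\tilde f,\tilde g)$ equals the $\alpha$-contribution to $\gamma_{n+1-k}$, and symmetrically for $\beta$. Hence $\wt_{\alpha^r,\beta^r}(\tilde f,\tilde g) = (\gamma_n,\dots,\gamma_1) = \rho(\wt_{\alpha,\beta}(f,g))$. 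Summing over the bijection then gives $\alpha^r \oshuffle \beta^r = \sum_n \sum_{(f,g)} \rho(\wt_{\alpha,\beta}(f,g)) = \rho(\alpha \oshuffle \beta)$. I expect the only delicate point to be the index bookkeeping in this weight computation, where the reindexings $i \leftrightarrow \ell+1-i$ and $k \leftrightarrow n+1-k$ must be carried out consistently; but this is routine once the involution is in place. An alternative route via the recurrence of Proposition \ref{prop:osp_rec} would first require establishing a right-handed version of that recurrence (since $\rho$ reverses concatenation), which seems to duplicate rather than save effort.
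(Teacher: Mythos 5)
Your proof is correct and follows exactly the same route as the paper: the same involution $(f,g)\mapsto(\tilde f,\tilde g)$ with $\tilde f(i)=n+1-f(\ell+1-i)$, $\tilde g(j)=n+1-g(m+1-j)$ on $S(\ell,m,n)$, together with the weight identity $\wt_{\alpha^{r},\beta^{r}}(\tilde f,\tilde g)=\wt_{\alpha,\beta}(f,g)^{r}$. Your write-up simply makes the well-definedness and index bookkeeping more explicit than the paper does.
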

\begin{proof}
For each pair $ (f,g) \in S(\ell,m,n) $, define a pair $ (f^{\prime},g^{\prime}) \in S(\ell,m,n) $ by 
\begin{align*}
f^{\prime}(i) &\coloneqq n+1 - f(\ell+1-i), \text{ where } i, \ell+1-i \in [\ell], \\
\text{ and } g^{\prime}(j) &\coloneqq n+1 - g(m+1-j) , \text{ where } j, m+1-j \in [m].  
\end{align*}
This correspondence is bijective and we have that $ \wt_{\alpha, \beta}(f,g)^{r} = \wt_{\alpha^{r}, \beta^{r}}(f^{\prime},g^{\prime}) $. 
Hence, by Proposition \ref{prop:prodM}, 
\begin{align*}
(\alpha \oshuffle \beta)^{r}
= \sum_{n=0}^{\infty}\sum_{(f,g) \in S(\ell,m,n)}\wt_{\alpha, \beta}(f,g)^{r}
= \sum_{n=0}^{\infty}\sum_{(f^{\prime},g^{\prime}) \in S(\ell,m,n)}\wt_{\alpha^{r},\beta^{r}}(f^{\prime},g^{\prime})
= \alpha^{r} \oshuffle \beta^{r}. 
\end{align*}
\end{proof}

\begin{definition}
An element in $ \mathcal{M} $ is said to be \textbf{primitive} if the greatest common divisor of its coefficients is $ 1 $. 
\end{definition}

\begin{lemma}\label{lem:irred}
Let $ q \in \mathcal{M} $ be a primitive nonzero element. 
Then $ (1) \ast q $ and $ q \ast (1) $ are irreducible with respect to the overlapping shuffle product. 
\end{lemma}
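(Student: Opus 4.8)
The plan is to combine the leading-term machinery of Propositions \ref{prop:LT1} and \ref{prop:LT2} with the grading of $(\mathcal{M},\oshuffle)$ by weight $|\alpha| = \alpha_{1} + \cdots + \alpha_{\ell}$. Both the concatenation and the overlapping shuffle preserve this weight, and the degree-zero part is $\mathbb{Z}\varnothing$, so the units of the overlapping shuffle algebra are exactly $\pm\varnothing$. Since every composition in the support of $(1)\ast q$ begins with the part $1$ and is therefore nonempty, $(1)\ast q$ is a nonzero non-unit; it remains to show that in any factorization $(1)\ast q = a \oshuffle b$ one factor is a unit. Note that both $a,b$ are nonzero because $q\neq 0$.

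First I would compare leading terms. Writing $\LT(a)=c\alpha$ and $\LT(b)=d\beta$, Proposition \ref{prop:LT2} gives $\LT(a\oshuffle b)=cd\,(\alpha\dotplus\beta)$. On the other hand, prepending the fixed part $1$ to every composition preserves the lexicographic order on the support of $q$, so $\LT\bigl((1)\ast q\bigr)=(1)\ast\LT(q)$, a composition whose first part equals $1$. Hence $\alpha\dotplus\beta$ has first part $1$. Since every part of a nonempty composition is at least $1$, if both $\alpha$ and $\beta$ were nonempty the first part of $\alpha\dotplus\beta$ would be $\alpha_{1}+\beta_{1}\geq 2$; therefore at least one of $\alpha,\beta$ is the empty composition.

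Say $\beta=\varnothing$. The crucial point is that $\varnothing$ is the \emph{least} composition in the lexicographic order, so $\LT(b)=d\varnothing$ forces $b$ to have no other term, i.e.\ $b=d\varnothing$. Then $(1)\ast q = a\oshuffle(d\varnothing)=d\,a$. Because $\delta\mapsto(1)\ast\delta$ is injective on compositions, it matches the support of $q$ with that of $(1)\ast q$ while preserving coefficients, so $(1)\ast q$ is primitive; as $d$ divides all of its coefficients we get $d=\pm1$, whence $b=\pm\varnothing$ is a unit. The symmetric case $\alpha=\varnothing$ makes $a$ a unit, proving that $(1)\ast q$ is irreducible. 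For $q\ast(1)$ I would invoke the reverse involution: since reversing $(1,\delta_{1},\dots,\delta_{k})$ yields $(\delta_{k},\dots,\delta_{1},1)$, we have $q\ast(1)=\rho\bigl((1)\ast\rho(q)\bigr)$, and by Proposition \ref{prop:rev_osp_comp} the map $\rho$ is an automorphism of $(\mathcal{M},\oshuffle)$ that visibly preserves primitivity and hence sends irreducibles to irreducibles; applying the first part to the primitive element $\rho(q)$ then finishes the argument.

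The main obstacle, if one can call it that, is purely conceptual rather than computational: one must notice that the overlapping-shuffle leading term adds first parts (via $\dotplus$) while the concatenation leading term retains the prepended $1$, so the two forces can only be reconciled by making one factor's leading composition empty—and then exploit that $\varnothing$ being lex-minimal collapses that factor to a scalar, at which point primitivity pins it to a unit. Getting this clean degeneration stated correctly, in particular the implication ``$\LT(b)=d\varnothing \Rightarrow b=d\varnothing$,'' is the real content of the proof.
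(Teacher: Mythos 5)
Your proof is correct and follows essentially the same route as the paper: compare the leading term $cd(\alpha\dotplus\beta)$ of a putative factorization (Proposition \ref{prop:LT2}) with the fact that every composition in $(1)\ast q$ has first part $1$, use primitivity to rule out scalar factors, and reduce $q\ast(1)$ to $(1)\ast q$ via the reverse involution of Proposition \ref{prop:rev_osp_comp}. The only difference is organizational—you argue forward that one factor must be a unit and explicitly identify the units of $(\mathcal{M},\oshuffle)$, whereas the paper argues by contradiction from a factorization into non-constant elements—but the mathematical content is identical.
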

\begin{proof}
By Proposition \ref{prop:rev_osp_comp}, it suffices to show irreducibility of $ (1)\ast q $. 
Assume that $ (1)\ast q $ is reducible. 
Since $ (1)\ast q $ is primitive, there exist non-constant elements $ p,p^{\prime} \in \mathcal{M} $ such that $ (1)\ast q = p \oshuffle p^{\prime} $. 
When $ \LT(p)=c\alpha, \LT(p^{\prime}) = d\beta $, 
the leading term of $ p \oshuffle p^{\prime} $ is $ cd(\alpha \dotplus \beta) $ by Proposition \ref{prop:LT2}. 
Since $ \alpha $ and $ \beta $ are non-empty, the first component of $ \alpha \dotplus \beta $ is greater than $ 1 $. 
However, the first component of each term of $ (1)\ast q $ is $ 1 $, which is a contradiction. 
\end{proof}

\section{Labeled posets and their quasisymmetric generating functions}\label{labeled posets}

A labeling of a finite poset $ P $ is a bijection $ \omega \colon P \rightarrow [|P|] $, where $ |P| $ denotes its cardinality. 
The pair $ (P, \omega) $ is called a \textbf{labeled poset}. 
A labeling $ \omega $ is said to be \textbf{strict} (resp.~\textbf{natural}) if 
\begin{align*}
u < v &\Rightarrow \omega(u) > \omega(v) \\
(\text{resp. } u < v &\Rightarrow \omega(u) < \omega(v)). 
\end{align*}
A \textbf{$ (P,\omega) $-partition} is a map $ f \colon P \rightarrow \mathbb{N} $ satisfying the following two conditions: 
\begin{enumerate}[\rm(1)]
\item $ u \leq v $ implies $ f(u) \leq f(v) $; 
\item $ u \leq v $ and $ \omega(u) > \omega(v) $ imply $ f(u) < f(v) $. 
\end{enumerate}
Let $ A(P,\omega) $ denote the set of $ (P,\omega) $-partitions. 
Note that if $ \omega $ is strict (resp.~natural) then $ A(P,\omega) $ coincides with $ \Hom^{<}(P,\mathbb{N}) $ (resp.~$ \Hom^{\leq}(P,\mathbb{N}) $). 

\begin{definition}
For a labeled poset $ (P,\omega) $,  the \textbf{$ (P,\omega) $-partition generating function} is the formal power series
\begin{align*}
\Gamma(P,\omega, \boldsymbol{x}) \coloneqq \sum_{f \in A(P,\omega)}\prod_{v \in P}x_{f(v)}. 
\end{align*}
\end{definition} 
It is easy to show that $ \Gamma(P,\omega,\boldsymbol{x}) $ is a quasisymmetric function (see also Proposition \ref{prop:expM}). 
Note that if the labeling $ \omega $ is strict (resp.~natural) then $ \Gamma(P,\omega,\boldsymbol{x}) $ coincides with $ \Gamma^{<}(P,\boldsymbol{x}) $ (resp.~$ \Gamma^{\leq}(P,\boldsymbol{x}) $). The one-point poset $[1]$ has the $ (P,\omega) $-partition generating function $\Gamma([1],\omega, \boldsymbol{x}) =M_{(1)}$. See Example \ref{exa:SOP} for other examples. 
A \textbf{complementary labeling} $ \overline{\omega} $ of $ \omega $ is defined by $ \overline{\omega}(v) \coloneqq |P|+1-\omega(v) $. 
If $ \omega $ is strict (resp.~natural) then $ \overline{\omega} $ is natural (resp.~strict). 

\begin{proposition}[McNamara-Ward {\cite[Proposition 3.7]{mcnamaca2014equality}}]\label{prop:st_wk}
Let $ (P,\omega) $ and $ (Q,\tau) $ be labeled posets. 
Then $ \Gamma(P,\omega,\boldsymbol{x}) = \Gamma(Q,\tau, \boldsymbol{x}) $ if and only if $ \Gamma(P,\overline{\omega},\boldsymbol{x}) = \Gamma(Q,\overline{\tau}, \boldsymbol{x}) $. 
In particular, $ \Gamma^{<}(P, \boldsymbol{x}) = \Gamma^{<}(Q, \boldsymbol{x}) $ if and only if $ \Gamma^{\leq}(P, \boldsymbol{x}) = \Gamma^{\leq}(Q, \boldsymbol{x}) $. 
\end{proposition}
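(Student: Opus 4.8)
The plan is to expand $\Gamma(P,\omega,\boldsymbol{x})$ in Gessel's fundamental basis and then to recognize the passage $\omega \mapsto \overline{\omega}$ as a linear bijection of $\QSym$. Although the excerpt only develops the monomial basis, the cleanest route uses the fundamental quasisymmetric functions $F_{n,D}$, which are the standard tool in $(P,\omega)$-partition theory. First I would record, with $n=|P|$, the expansion $\Gamma(P,\omega,\boldsymbol{x}) = \sum_{\sigma} F_{n,\mathrm{Des}(\sigma)}$, where $\sigma$ ranges over the words $(\omega(t_1),\dots,\omega(t_n)) \in S_n$ obtained by reading labels along the linear extensions $t_1 < \cdots < t_n$ of $P$, where $\mathrm{Des}(\sigma) = \{i : \sigma_i > \sigma_{i+1}\} \subseteq [n-1]$, and where $F_{n,D} = \sum x_{a_1}\cdots x_{a_n}$ sums over $a_1 \le \cdots \le a_n$ subject to $a_i < a_{i+1}$ for $i \in D$. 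This follows from Stanley's decomposition of $A(P,\omega)$ as a disjoint union over linear extensions: along a chain $t_1 < \cdots < t_n$ a $(P,\omega)$-partition is exactly a weakly increasing sequence that is forced to be strict precisely at the steps where the label decreases, i.e.\ at $\mathrm{Des}(\sigma)$.

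Next I would analyze the effect of complementation. Since $\overline{\omega} = n+1-\omega$ leaves the poset $P$, and hence its set of linear extensions, unchanged, the word read along a given linear extension is merely replaced by its complement $\sigma^c = (n+1-\sigma_1,\dots,n+1-\sigma_n)$. A direct check gives $\mathrm{Des}(\sigma^c) = [n-1]\setminus\mathrm{Des}(\sigma)$, so summing over the same index set yields $\Gamma(P,\overline{\omega},\boldsymbol{x}) = \sum_\sigma F_{n,[n-1]\setminus\mathrm{Des}(\sigma)}$.

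The third step is the linear-algebra conclusion. Because $\{F_{n,D} : D \subseteq [n-1]\}$ is a basis of the degree-$n$ component $\QSym_n$, the assignment $F_{n,D} \mapsto F_{n,[n-1]\setminus D}$ defines a $\mathbb{Z}$-linear involutive bijection $\phi_n$ of $\QSym_n$, and collecting these over all $n$ gives a graded linear bijection $\phi$ of $\QSym$ with $\phi(\Gamma(P,\omega,\boldsymbol{x})) = \Gamma(P,\overline{\omega},\boldsymbol{x})$ by Steps 1 and 2. If $|P| \ne |Q|$ both equalities in the statement fail on degree grounds, so I may assume $|P| = |Q| = n$; then applying the bijection $\phi_n$ gives $\Gamma(P,\omega) = \Gamma(Q,\tau) \iff \Gamma(P,\overline{\omega}) = \Gamma(Q,\overline{\tau})$. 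For the final assertion I would choose strict labelings $\omega$ of $P$ and $\tau$ of $Q$, so that $\overline{\omega},\overline{\tau}$ are natural; then $\Gamma(P,\omega)=\Gamma^<(P)$ and $\Gamma(P,\overline{\omega})=\Gamma^\le(P)$, and likewise for $Q$, using the remark that $A(P,\omega)=\Hom^<(P,\mathbb{N})$ (resp.\ $\Hom^\le(P,\mathbb{N})$) whenever $\omega$ is strict (resp.\ natural), so these invariants do not depend on the chosen labeling. The general equivalence then specializes to $\Gamma^<(P)=\Gamma^<(Q) \iff \Gamma^\le(P)=\Gamma^\le(Q)$.

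The only genuine content is Step 1, the fundamental-basis expansion via Stanley's decomposition of $A(P,\omega)$; Steps 2 and 3 are routine once the descent bookkeeping is fixed. The main thing to watch is the descent convention, namely which inequality is forced at which step, since a reversed convention would turn complementation into reversal of $\mathrm{Des}(\sigma)$ and break the identity $\phi(\Gamma(P,\omega))=\Gamma(P,\overline{\omega})$. I emphasize that $\phi$ need not be a ring homomorphism, and indeed no compatibility with the overlapping shuffle product is needed here; only the bijectivity of $\phi$ on each graded piece is used.
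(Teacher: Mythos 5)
Your proof is correct. Note that the paper does not actually prove this proposition at all --- it is quoted from McNamara--Ward \cite[Proposition 3.7]{mcnamaca2014equality} --- so you have supplied a proof where the authors supply only a citation; the argument you give (expand $\Gamma(P,\omega,\boldsymbol{x})$ over linear extensions in the fundamental basis $F_{n,D}$, observe that complementing the labeling fixes the set of linear extensions and complements each descent set, and then apply the linear involution $F_{n,D}\mapsto F_{n,[n-1]\setminus D}$ of the degree-$n$ graded piece) is the standard one and is essentially how the cited result is established in the source. The bookkeeping is right: with your convention that strictness is forced at step $i$ exactly when $\omega(t_i)>\omega(t_{i+1})$, complementation of $\omega$ does complement $\mathrm{Des}(\sigma)$, and your handling of the degree mismatch case and of the specialization to strict/natural labelings for the final assertion is fine. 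The only ingredients you take on faith --- Stanley's decomposition of $A(P,\omega)$ over linear extensions and the fact that $\{F_{n,D}\}_{D\subseteq[n-1]}$ is a $\mathbb{Z}$-basis of the degree-$n$ component of $\QSym$ --- are standard and are no more than what the paper itself implicitly relies on by citing the result. It is worth remarking that this route goes outside the toolkit the paper develops (which works only with the monomial basis and stable ordered partitions), but that is a reasonable choice, since the complementation $\omega\mapsto\overline{\omega}$ does not act in any transparent way on the sets $\St_\alpha(P,\omega)$, whereas it acts perfectly on the fundamental basis.
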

Proposition \ref{prop:st_wk} shows the equivalence between \eqref{thm: main1} and \eqref{thm: main2} in Theorem \ref{thm: main}. 
Hence we may focus on the strict order quasisymmetric function $ \Gamma^{<}(P,\boldsymbol{x}) $. 

\begin{proposition}[McNamara-Ward {\cite[Proposition 3.7 and Corollary 4.3]{mcnamaca2014equality}}]\label{prop:number_min_max}
Let $ P $ and $ Q $ be finite posets such that $ \Gamma^{<}(P, \boldsymbol{x}) = \Gamma^{<}(Q,\boldsymbol{x}) $. 
Then $ P $ and $ Q $ have the same number of minimal elements, and the same number of maximal elements. 
\end{proposition}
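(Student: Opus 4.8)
The plan is to recover the number of minimal (and maximal) elements directly from the monomial expansion of the strict order quasisymmetric function. First I would expand $\Gamma^{<}(P, \boldsymbol{x}) = \sum_{\alpha} c_{\alpha}^{P} M_{\alpha}$ in the monomial basis, which is legitimate since the $M_{\alpha}$ form a $\mathbb{Z}$-basis of $\QSym$. Grouping the strict homomorphisms $f \in \Hom^{<}(P, \mathbb{N})$ according to their image $\{j_{1} < \cdots < j_{\ell}\}$ together with the fibre sizes $\alpha_{k} = |f^{-1}(j_{k})|$, and using quasisymmetry to see that the resulting count is independent of the chosen indices $j_{1} < \cdots < j_{\ell}$, one obtains that $c_{\alpha}^{P}$ equals the number of strict order-preserving surjections $f \colon P \twoheadrightarrow [\ell]$ with $|f^{-1}(k)| = \alpha_{k}$ for all $k$; equivalently, the number of ordered set partitions $(B_{1}, \dots, B_{\ell})$ of $P$ with $|B_{k}| = \alpha_{k}$ such that $u < v$ forces the block index of $u$ to be strictly smaller than that of $v$. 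Since $\Gamma^{<}(P,\boldsymbol{x}) = \Gamma^{<}(Q,\boldsymbol{x})$ is equivalent to $c_{\alpha}^{P} = c_{\alpha}^{Q}$ for every composition $\alpha$, any numerical quantity extracted from the family $\{\alpha : c_{\alpha}^{P} \neq 0\}$ is an invariant shared by $P$ and $Q$.

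The key observation is then that $|\Min(P)| = \max\{\alpha_{1} : c_{\alpha}^{P} \neq 0\}$. Indeed, in any valid ordered partition $(B_{1}, \dots, B_{\ell})$ as above, the bottom block satisfies $B_{1} \subseteq \Min(P)$: if some $v \in B_{1}$ were not minimal, there would be $u < v$ whose block index is strictly smaller than $1$, which is impossible. Hence $\alpha_{1} = |B_{1}| \leq |\Min(P)|$ whenever $c_{\alpha}^{P} \neq 0$. Conversely, this bound is attained: taking $B_{1} = \Min(P)$ and distributing the remaining elements into singletons along any linear extension yields a valid strict ordered partition, so the composition $\alpha$ it produces has $c_{\alpha}^{P} \neq 0$ and $\alpha_{1} = |\Min(P)|$. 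Applying this to both posets gives $|\Min(P)| = \max\{\alpha_{1} : c_{\alpha}^{P} \neq 0\} = \max\{\alpha_{1} : c_{\alpha}^{Q} \neq 0\} = |\Min(Q)|$.

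For maximal elements I would run the symmetric argument on the last part: the top block always satisfies $B_{\ell} \subseteq \Max(P)$, and the choice $B_{\ell} = \Max(P)$ (with the rest split into singletons along a linear extension) is admissible, so $|\Max(P)| = \max\{\alpha_{\ell} : c_{\alpha}^{P} \neq 0\}$, which is again a shared invariant; alternatively one can invoke the reverse involution $\rho$ together with the identity $\rho(\Gamma^{<}(P,\boldsymbol{x})) = \Gamma^{<}(P^{\ast},\boldsymbol{x})$ for the dual poset $P^{\ast}$ and reduce to the minimal-element case, since $\Min(P^{\ast}) = \Max(P)$. The only real obstacle is the bookkeeping in the first step, namely justifying carefully that $c_{\alpha}^{P}$ has the stated combinatorial meaning (the independence from the index set $j_{1} < \cdots < j_{\ell}$ is precisely quasisymmetry) and that the peeling-off construction $B_{1} = \Min(P)$ followed by a linear extension genuinely satisfies the strictness condition; both are routine once the definitions are unwound, so the proposition should follow without difficulty.
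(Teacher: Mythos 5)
Your proof is correct. Note that the paper does not actually prove this proposition; it is quoted from McNamara--Ward, so what you have written is a self-contained substitute for that citation. Your argument is sound: by Proposition \ref{prop:expM} the coefficient $c_\alpha^P=|\St^<_\alpha(P)|$ counts ordered partitions $(B_1,\dots,B_\ell)$ in which $u<v$ forces the block of $u$ to precede that of $v$; the containment $B_1\subseteq \Min(P)$ and the witness $(\,\Min(P),\ \text{singletons along a linear extension of the rest}\,)$ correctly pin down $|\Min(P)|=\max\{\alpha_1 : c^P_\alpha\neq 0\}$, and the dual argument on $B_\ell$ handles maximal elements. It is worth observing that your invariant is essentially the same one McNamara--Ward use: since the lexicographic order compares first entries first, $\max\{\alpha_1 : c^P_\alpha\neq 0\}$ is exactly the first entry of the lex-leading composition of $\Gamma^<(P,\boldsymbol{x})$, which by Proposition \ref{prop:lead_coeff} is $\jump(P,\omega)$ for the strict labeling $\omega$; its first entry $j_0$ counts the jump-$0$ elements, i.e., the minimal elements. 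So your first-block argument re-derives the relevant piece of their Corollary 4.3 directly, and your last-block (or reversal) argument plays the role of their Proposition 3.7. The one point to state explicitly if you write this up is that the reversal identity $\rho(\Gamma^<(P,\boldsymbol{x}))=\Gamma^<(P^\ast,\boldsymbol{x})$ you mention as an alternative does require its own (easy) verification by reversing the blocks of a stable ordered partition; the direct last-block argument avoids this entirely.
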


\begin{definition}\label{SOP}
A \textbf{stable ordered partition} of a labeled poset $ (P,\omega) $ is a tuple $ \Pi = (\pi_{1}, \dots, \pi_{\ell}) $ consisting of non-empty subsets of $ P $ which satisfy the following conditions: 
\begin{enumerate}[\rm(1)]
\item $ P = \sqcup_{i=1}^{\ell}\pi_{i} $; 
\item $ u \in \pi_{i}, v \in \pi_{j} $ and $ u \leq v $ imply $ i \leq j $; 
\item $ u \in \pi_{i}, v \in \pi_{j}, u < v $ and $ \omega(u) > \omega(v) $ imply $ i < j $. 
\end{enumerate}
Let $ \St(P,\omega) $ denote the set of the stable ordered partitions of $ (P,\omega) $. 
The \textbf{type} of a stable ordered partition $ \Pi = (\pi_{1}, \dots, \pi_{\ell}) $ is the composition $ (|\pi_{1}|, \dots, |\pi_{\ell}|) $ and is denoted by $ \type(\Pi) $. The set of stable ordered partitions of type $ \alpha $ is denoted by $ \St_{\alpha}(P,\omega) $. 
When the labeling $ \omega $ is strict, the sets $ \St(P,\omega) $ and $ \St_{\alpha}(P,\omega) $ are denoted by $ \St^{<}(P) $ and $ \St_{\alpha}^{<}(P) $, respectively. Note that $\St_\alpha^<(P)=\varnothing$ unless $\sum_{i=1}^\ell \alpha_i=|P|$ when $\alpha=(\alpha_1,\dots, \alpha_\ell)$. 
\end{definition}

The expansion of $ (P,\omega) $-partition generating functions pointed out by McNamara and Ward \cite[p.493]{mcnamaca2014equality} reads as follows in terms of stable ordered partitions. 
\begin{proposition}\label{prop:expM}
Let $ (P,\omega) $ be a labeled poset. 
Then 
\begin{align*}
\Gamma(P,\omega,\boldsymbol{x}) = \sum_{\alpha \in \mathbb{N}^{\ast}} |\St_{\alpha}(P,\omega)|M_{\alpha}. 
\end{align*}
In particular, 
\begin{align*}
\Gamma^{<}(P,\boldsymbol{x}) = \sum_{\alpha \in \mathbb{N}^{\ast}}|\St_{\alpha}^{<}(P)|M_{\alpha}. 
\end{align*}
\end{proposition}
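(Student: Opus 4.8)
The plan is to organize the $(P,\omega)$-partitions according to their fibers and to match them bijectively against stable ordered partitions equipped with an increasing choice of values. First I would observe that each $f \in A(P,\omega)$ determines such data: writing the image as $f(P) = \{i_1 < \dots < i_\ell\}$ and setting $\pi_k := f^{-1}(i_k)$, I claim that $\Pi := (\pi_1, \dots, \pi_\ell)$ lies in $\St(P,\omega)$. Conditions (1)--(3) in Definition \ref{SOP} are immediate translations of the definition of a $(P,\omega)$-partition: disjointness of fibers gives (1), while the monotonicity $u \le v \Rightarrow f(u) \le f(v)$ and the strictness $u < v,\ \omega(u) > \omega(v) \Rightarrow f(u) < f(v)$ give (2) and (3), using that $i_1 < \dots < i_\ell$ are strictly increasing so that order among the values $i_k$ mirrors order among the indices $k$.

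Second, I would set up the reverse assignment. Given $\Pi = (\pi_1, \dots, \pi_\ell) \in \St(P,\omega)$ and any strictly increasing sequence $i_1 < \dots < i_\ell$ in $\mathbb{N}$, define $f$ by $f(v) := i_k$ whenever $v \in \pi_k$, and verify $f \in A(P,\omega)$. These two assignments are visibly mutually inverse, yielding a bijection
\[ A(P,\omega) \longleftrightarrow \bigsqcup_{\Pi \in \St(P,\omega)} \{(i_1, \dots, i_{\ell(\Pi)}) : i_1 < \dots < i_{\ell(\Pi)}\}, \]
where $\ell(\Pi)$ denotes the length of $\Pi$.

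The verifications in both directions are routine once the dictionary is fixed, and the one point requiring a little care --- the closest thing to an obstacle here --- is reconciling condition (2) of a $(P,\omega)$-partition, whose hypothesis is $u \le v$, with condition (3) of a stable ordered partition, whose hypothesis is $u < v$. I would resolve this by noting that $\omega(u) > \omega(v)$ forces $u \ne v$, so under that hypothesis $u \le v$ automatically upgrades to $u < v$ and the two conditions match exactly.

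Finally, I would track weights across the bijection. For $f$ corresponding to $(\Pi, (i_1, \dots, i_\ell))$ we have $\prod_{v \in P} x_{f(v)} = \prod_{k=1}^{\ell} x_{i_k}^{|\pi_k|}$, since exactly $|\pi_k|$ vertices take the value $i_k$. Summing over all $f$, grouping by the underlying $\Pi$, and then summing over the admissible increasing sequences, the inner sum $\sum_{i_1 < \dots < i_\ell} x_{i_1}^{|\pi_1|} \cdots x_{i_\ell}^{|\pi_\ell|}$ is precisely $M_{\type(\Pi)}$ by the definition of the monomial quasisymmetric function. Collecting the stable ordered partitions of a common type $\alpha$ replaces the sum over $\Pi$ by the coefficient $|\St_\alpha(P,\omega)|$, giving the stated formula. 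The ``in particular'' assertion then follows by specializing $\omega$ to a strict labeling, for which $A(P,\omega) = \Hom^{<}(P,\mathbb{N})$ and $\St_\alpha(P,\omega) = \St_\alpha^{<}(P)$.
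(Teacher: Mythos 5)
Your proposal is correct and follows essentially the same route as the paper: group the $(P,\omega)$-partitions by their fiber decomposition $\Pi_f = (f^{-1}(i_1),\dots,f^{-1}(i_\ell))$, observe that the fibers within a fixed class contribute exactly $M_{\type(\Pi_f)}$, and sum over stable ordered partitions of each type. The paper phrases this via an equivalence relation on $A(P,\omega)$ rather than your explicit bijection with increasing value sequences, and it leaves the verification that $\Pi_f$ is stable (and the surjectivity) implicit, but these are only differences of presentation.
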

\begin{proof}
For each $ f \in A(P,\omega) $ there is a sequence of increasing indices $ i_{1} < \dots < i_{\ell} $ such that $ \{i_{1}, \dots, i_{\ell}\} = \Set{i \in \mathbb{N} | f^{-1}(i) \neq \varnothing} $. 
Define a stable ordered partition corresponding to $ f $ by $ \Pi_{f} \coloneqq (f^{-1}(i_{1}), \dots, f^{-1}(i_{\ell})) $. 
The map $ A(P,\omega) \rightarrow \St(P,\omega); f \mapsto \Pi_{f} $ is surjective. 
We define an equivalence relation $ f \sim g $ on $ A(P,\omega) $ by $ \Pi_{f} = \Pi_{g} $. 
Then for each $ f \in A(P,\omega) $, we have that 
\begin{align*}
\sum_{g \sim f}\prod_{v \in P}x_{g(v)} = M_{\type(\Pi_{f})}. 
\end{align*}
Therefore the desired result follows. 
\end{proof}

\begin{example}\label{exa:SOP}
\begin{figure}
\centering
\begin{tikzpicture}
\draw (0,0) node[v](1){};
\draw (-1,1) node[v](2){};
\draw (1,1) node[v](3){};
\draw (1)--(2); 
\draw (1)--(3);
\end{tikzpicture}
\caption{Hasse diagram of $\vee=\{a,b,c\}$}\label{fig:SOP}
\end{figure}
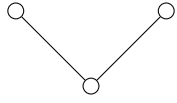

Let $\vee=\{a,b,c\}$ be the poset with the two relations $a<b$ and $a<c$ in Fig.\ \ref{fig:SOP}, and let $\omega$ be the strict labeling. Then
the second condition in Definition \ref{SOP} is contained in the third, and the conditions become ``$u \in \pi_{i}, v \in \pi_{j}, u < v $ imply $ i < j $''. The ordered stable partitions are listed below: 
\[
(\{a\},\{b\},\{c\}), (\{a\},\{c\},\{b\}), (\{a\},\{b,c\}). 
\]
Then Proposition \ref{prop:expM} tells us that
\[
\Gamma^<(P,\boldsymbol{x}) = \sum_{\substack{i<j\\ i<k}}x_i x_j x_k = 2 M_{(1,1,1)} + M_{(1,2)}. 
\]

Another example is the same poset $\vee=\{a,b,c\}$ with the different labeling $\omega'(a)=2, \omega'(b)=1, \omega'(c)= 3$. In this case the third condition in Definition \ref{SOP} forbids the elements $a$ and $b$ to belong to the same block. The ordered stable partitions are given by
\[
(\{a\},\{b\},\{c\}), (\{a\},\{c\},\{b\}), (\{a\},\{b,c\}), (\{a,c\},\{b\}) 
\]
and so 
\[
\Gamma(P,\omega',\boldsymbol{x}) =  \sum_{\substack{i<j\\ i\leq k}}x_i x_j x_k=2 M_{(1,1,1)} + M_{(1,2)} +M_{(2,1)}. 
\]
\end{example}

For elements $ u,v $ in a poset $ P $, we say that $ v $ \textbf{covers} $ u $ if $ u < v $ and there are no elements $ w \in P $ such that $ u < w < v $. 
If $v$ covers $ u $ then the pair $ (u,v) $ is an edge of the Hasse diagram of $ P $. 
For a labeled poset $ (P,\omega) $, we say that $ (u,v) $ is a \textbf{strict edge} if $v$ covers $u$ and $ \omega(u) > \omega(v) $. 

\begin{definition}
The \textbf{jump} of an element $ v $ in a labeled poset $ (P,\omega) $ is the maximum number of strict edges in saturated chains from $ v $ down to a minimal element in $ P $. 
The \textbf{jump sequence} of $ P $, denoted by $ \jump(P,\omega) $, is a composition $ (j_{0}, \dots, j_{\ell}) $, where $ j_{i} $ denotes the number of elements with jump $ i $. 
\end{definition}

We introduce the lexicographical order on the monomial quasisymmetric functions $ M_{\alpha} $, i.e., $ M_{\alpha} \leq M_{\beta} \Leftrightarrow \alpha \leq \beta $. Then the leading term of a quasisymmetric function $ Q=\sum_{\alpha \in \mathbb{N}^{\ast}}c_{\alpha}M_{\alpha} $ is the term $ c_{\alpha}M_{\alpha} $, where $ M_{\alpha} $ is the maximum monomial quasisymmetric function in $ Q $ with $ c_{\alpha} \neq 0 $. In this case $c_\alpha$ is called the leading coefficient. 

\begin{proposition}[McNamara-Ward {\cite[Proposition 4.2]{mcnamaca2014equality}}]\label{prop:lead_coeff}
For a labeled poset $ (P,\omega) $, the leading term of $ \Gamma(P,\omega,\boldsymbol{x}) $ is $ M_{\jump(P,\omega)} $. 
In particular, the leading coefficient of $ \Gamma(P,\omega,\boldsymbol{x}) $ is $ 1 $ and $ \Gamma(P,\omega,\boldsymbol{x}) $ is primitive. 
\end{proposition}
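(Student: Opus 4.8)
The plan is to work directly with the combinatorial expansion $\Gamma(P,\omega,\boldsymbol{x}) = \sum_{\alpha} |\St_{\alpha}(P,\omega)| M_{\alpha}$ from Proposition \ref{prop:expM}, so that identifying the leading term amounts to finding the lexicographically greatest composition $\alpha$ with $\St_{\alpha}(P,\omega) \neq \varnothing$ and then checking that for this $\alpha$ there is exactly one stable ordered partition. This immediately reduces the statement to two combinatorial assertions about $\St(P,\omega)$.

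First I would exhibit a distinguished candidate: the partition $\Pi^{\ast} = (\pi_{1}^{\ast}, \pi_{2}^{\ast}, \dots)$ grouping elements by jump, $\pi_{i}^{\ast} := \Set{v \in P | \jump(v) = i-1}$, whose type is exactly $\jump(P,\omega)$ by definition of the jump sequence. The task is to verify $\Pi^{\ast} \in \St(P,\omega)$. Condition (2) of Definition \ref{SOP} follows from monotonicity of the jump: if $u \le v$, concatenating a saturated chain from $v$ down to $u$ with an optimal chain from $u$ to a minimal element shows $\jump(u) \le \jump(v)$. For condition (3), the key observation is that if $u < v$ and $\omega(u) > \omega(v)$, then any saturated chain $v = w_{0} \gtrdot \cdots \gtrdot w_{k} = u$ contains a strict edge, since $\sum_{t}(\omega(w_{t+1}) - \omega(w_{t})) = \omega(u) - \omega(v) > 0$ forces some descending cover to increase $\omega$; prepending this chain to an optimal chain below $u$ gives $\jump(v) \ge \jump(u) + 1$, as required.

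Next comes maximality. The crucial lemma is that in any stable ordered partition $\Pi = (\pi_{1}, \dots, \pi_{k})$, an element $v \in \pi_{i}$ satisfies $i \ge \jump(v) + 1$: walking down an optimal saturated chain from $v$, the block index is non-increasing by condition (2) and drops strictly at each of the $\jump(v)$ strict edges by condition (3), while the minimal element lies in a block of index $\ge 1$. Summing, the blocks $\pi_{1}, \dots, \pi_{t}$ contain only elements of jump $\le t-1$, so $\sum_{s \le t} |\pi_{s}| \le \sum_{r < t} j_{r}$ for every $t$. Since both sides have equal total $|P|$, this partial-sum domination of $\type(\Pi)$ by $\jump(P,\omega)$ yields $\type(\Pi) \le \jump(P,\omega)$ in the lexicographical order, so the leading term is $M_{\jump(P,\omega)}$.

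Finally, for uniqueness of the optimal partition, if $\type(\Pi) = \jump(P,\omega)$ then the above partial sums hold with equality, so $\bigcup_{s \le t} \pi_{s}$ is a subset of $\Set{v | \jump(v) \le t-1}$ of matching cardinality; equality for all $t$ then forces $\pi_{t} = \Set{v | \jump(v) = t-1} = \pi_{t}^{\ast}$. Hence $|\St_{\jump(P,\omega)}(P,\omega)| = 1$, the leading coefficient is $1$, and since one coefficient equals $1$ the gcd of all coefficients is $1$, i.e., $\Gamma(P,\omega,\boldsymbol{x})$ is primitive. I expect the main obstacle to be the two chain-walking arguments — establishing condition (3) for $\Pi^{\ast}$ and the bound $i \ge \jump(v)+1$ — since both hinge on correctly tracking how $\omega$ and the block indices behave along saturated chains; the passage from partial-sum domination to lexicographical order is then routine.
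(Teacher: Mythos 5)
Your proof is correct. The paper does not prove this proposition itself --- it is quoted from McNamara and Ward \cite[Proposition 4.2]{mcnamaca2014equality} --- and your argument (exhibiting the jump-level partition as a stable ordered partition, then bounding the block index of any $v$ by $i\ge\jump(v)+1$ to get partial-sum domination, hence lexicographic maximality of $\jump(P,\omega)$ and uniqueness of the extremal partition) is essentially the standard proof from that source, so there is nothing further to compare.
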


\begin{definition}
For posets $ P $ and $ Q $, the \textbf{disjoint union} $ P \sqcup Q $ is a poset whose underlying set is the disjoint union of $ P $ and $ Q $ and the order $ \leq $ is defined in such a way that $ u \leq v $ if and only if 
\begin{enumerate}[(1)]
\item $ u,v \in P $ and $ u \leq_{P} v $, or 
\item $ u,v \in Q $ and $ u \leq_{Q} v $. 
\end{enumerate}
\end{definition}
\begin{remark}
The disjoint union is also called the \textbf{parallel composition}. 
\end{remark}

\begin{definition}
For posets $ P $ and $ Q $, the \textbf{ordinal sum} $ P \oplus Q $ is a poset whose underlying set is the disjoint union of $ P $ and $ Q $, and the order $\leq$ is defined in such a way that $u \leq v$ if and only if  
\begin{enumerate}[\rm(1)]
\item $ u,v \in P $ and $ u \leq_P v $, or 
\item $ u,v \in Q $ and $ u \leq_Q v $, or
\item $ u \in P $ and $ v \in Q $. 
\end{enumerate}
\end{definition}
\begin{remark}
The ordinal sum is also called the \textbf{series composition} or the \textbf{linear sum}. 
\end{remark}

\begin{proposition}\label{prop:sstp_ordsum}
Let $ P $ and $ Q $ be finite posets.
Then the map $ \phi \colon \St^{<}(P) \times \St^{<}(Q) \rightarrow \St^{<}(P \oplus Q)$ defined by $ \phi(\Pi,\Pi^{\prime}) \coloneqq (\Pi,\Pi^{\prime})$ is a bijection. In particular, it induces the bijection 
$$
\bigcup_{\substack{\alpha,\beta \in \mathbb{N}^\ast \\ \alpha\ast \beta =\gamma}}\left(\St^{<}_{\alpha}(P) \times \St^{<}_{\beta}(Q) \right) \simeq \St^{<}_{\gamma}(P\oplus Q)
$$ 
for any composition $\gamma$. 
\end{proposition}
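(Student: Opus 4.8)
The plan is to verify directly that $\phi$ is well-defined and bijective, the heart of the matter being a description of what the stable ordered partitions of $P \oplus Q$ can look like. First I would record the simplification of Definition \ref{SOP} in the strict case: since a strict labeling $\omega$ satisfies $u < v \Rightarrow \omega(u) > \omega(v)$, condition (3) collapses to ``$u < v$ implies $i < j$'' and subsumes condition (2). Thus $\St^{<}(P)$ is exactly the set of ordered set partitions of $P$ whose blocks are antichains and whose block order refines the poset order; in particular it depends only on $P$ and not on the chosen strict labeling. This observation lets me compare $\St^{<}(P)$, $\St^{<}(Q)$, and $\St^{<}(P \oplus Q)$ uniformly, and it is exactly what makes the restriction of a strict labeling from $P \oplus Q$ to $P$ (which need not be onto $[|P|]$) harmless.

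Next I would check that $\phi$ is well-defined, i.e., that the concatenation $(\Pi,\Pi') = (\pi_{1}, \dots, \pi_{k}, \pi'_{1}, \dots, \pi'_{m})$ of $\Pi \in \St^{<}(P)$ and $\Pi' \in \St^{<}(Q)$ lies in $\St^{<}(P \oplus Q)$. That it partitions $P \oplus Q$ into non-empty blocks is immediate. For the order condition, take $u$ in block $i$ and $v$ in block $j$ with $u < v$ in $P \oplus Q$, and split into the cases $u,v \in P$, $u,v \in Q$, and $u \in P, v \in Q$; the case $u \in Q, v \in P$ cannot occur because no element of $Q$ lies below an element of $P$ in the ordinal sum. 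In the first two cases $i < j$ follows from $\Pi \in \St^{<}(P)$ or $\Pi' \in \St^{<}(Q)$, and in the third case $i \leq k < j$ since the $P$-blocks occupy positions $1, \dots, k$ and the $Q$-blocks positions $k+1, \dots, k+m$.

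The main point is \emph{surjectivity}, and it rests on two structural claims about an arbitrary $\Sigma = (\sigma_{1}, \dots, \sigma_{n}) \in \St^{<}(P \oplus Q)$. The first is that each block lies entirely in $P$ or entirely in $Q$: if a block contained some $p \in P$ and some $q \in Q$, then $p < q$ in $P \oplus Q$ while both sit in the same block, violating the strict requirement $i < j$. The second is that every $P$-block precedes every $Q$-block: a $Q$-block at position $s$ before a $P$-block at position $t > s$ would give, for $q$ in the former and $p$ in the latter, a relation $p < q$ with the block of $p$ after the block of $q$, again contradicting $i < j$. Hence $\Sigma$ splits as a prefix of $P$-blocks followed by a suffix of $Q$-blocks, yielding the required $\Pi$ and $\Pi'$; the conditions restrict correctly because the induced order on $P$ (resp.\ $Q$) inside $P \oplus Q$ is the original one. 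Injectivity is then clear, as $\Pi$ and $\Pi'$ are recovered as the $P$-blocks and $Q$-blocks of $\phi(\Pi,\Pi')$. Finally the refinement to types is automatic: concatenating block-size sequences gives $\type(\phi(\Pi,\Pi')) = \type(\Pi) \ast \type(\Pi')$, so $\phi$ restricts to the stated bijection $\bigsqcup_{\alpha \ast \beta = \gamma} \bigl(\St^{<}_{\alpha}(P) \times \St^{<}_{\beta}(Q)\bigr) \simeq \St^{<}_{\gamma}(P \oplus Q)$. I expect no serious obstacle; the only care needed is the labeling-independence noted at the outset, which legitimizes comparing strict stable ordered partitions across $P$, $Q$, and $P \oplus Q$.
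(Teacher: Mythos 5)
Your proposal is correct and follows essentially the same route as the paper: the key step in both is that for $u\in P$ and $v\in Q$ the ordinal-sum relation $u<v$ forces the block of $u$ to strictly precede the block of $v$, so any stable ordered partition of $P\oplus Q$ splits into a prefix of $P$-blocks and a suffix of $Q$-blocks. Your additional checks (well-definedness of $\phi$, that no block mixes $P$- and $Q$-elements, and the labeling-independence of $\St^{<}$) are details the paper leaves implicit, not a different argument.
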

\begin{proof}
The injectivity is clear. 
To show the surjectivity, take $ \tilde{\Pi} = (\pi_{1}, \dots, \pi_{\ell}) \in \St^{<}(P \oplus Q) $. 
Suppose that $ u \in P, v \in Q $ and $ u \in \pi_{i}, v \in \pi_{j} $. 
By the definition of the ordinal sum, we have that $ u < v $. 
The definition of a stable ordered partition implies that $ i < j $. 
Therefore there exists an index $ k $ such that $ P = \sqcup_{i=1}^{k}\pi_{i} $ and $ Q=\sqcup_{i=k+1}^{\ell}\pi_{i} $. 
Taking $ \Pi \coloneqq (\pi_{1}, \dots, \pi_{k}) $ and $ \Pi^{\prime} \coloneqq (\pi_{k+1}, \dots, \pi_{\ell}) $, the surjectivity of $ \phi $ follows. The last identity holds since the index $k$ runs over $[\ell]$ depending on $ \tilde{\Pi} \in \St^{<}_\gamma(P \oplus Q) $. 
\end{proof}

Recall that the concatenation $ \ast $ on $ \QSym $ is the linear extension of $ M_{\alpha} \ast M_{\beta} = M_{\alpha \ast \beta} $ (see Theorem \ref{thm:Hazewinkel}). 
\begin{proposition}\label{lem:prod}
Let $ P $ and $ Q $ be finite posets. 
Then the following assertions hold. 
\begin{enumerate}[\rm(a)]
\item\label{lem:prod1} $ \Gamma^{<}(P \sqcup Q, \boldsymbol{x}) = \Gamma^{<}(P,\boldsymbol{x})\Gamma^{<}(Q,\boldsymbol{x}) $. 
\item\label{lem:prod2} $ \Gamma^{<}(P \oplus Q, \boldsymbol{x}) = \Gamma^{<}(P, \boldsymbol{x}) \ast \Gamma(Q, \boldsymbol{x}) $. 
\end{enumerate}
\end{proposition}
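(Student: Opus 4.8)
The plan is to prove the two identities separately, handling part \eqref{lem:prod1} by a direct bijection of homomorphisms and part \eqref{lem:prod2} by combining the two structural propositions already established.

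For part \eqref{lem:prod1} I would argue straight from the definition of $\Gamma^{<}$. Since the disjoint union $P \sqcup Q$ imposes no order relation between elements of $P$ and elements of $Q$, a map $f \colon P \sqcup Q \to \mathbb{N}$ is a strict homomorphism precisely when both restrictions $f_{P}$ and $f_{Q}$ (to $P$ and to $Q$) are strict homomorphisms. Hence restriction gives a bijection $\Hom^{<}(P \sqcup Q, \mathbb{N}) \cong \Hom^{<}(P, \mathbb{N}) \times \Hom^{<}(Q, \mathbb{N})$, and under it the weight monomial factors as $\prod_{v \in P \sqcup Q} x_{f(v)} = \big(\prod_{u \in P} x_{f_{P}(u)}\big)\big(\prod_{w \in Q} x_{f_{Q}(w)}\big)$. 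Summing over all strict homomorphisms and distributing the product of the two sums yields $\Gamma^{<}(P \sqcup Q, \boldsymbol{x}) = \Gamma^{<}(P, \boldsymbol{x})\,\Gamma^{<}(Q, \boldsymbol{x})$. I would remark that the product on the right is the ordinary product inherited from $\mathbb{Z}[[\boldsymbol{x}]]$ (equivalently the overlapping shuffle product under the isomorphism $M$), so no separate compatibility check against Proposition \ref{prop:prodM} is required.

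For part \eqref{lem:prod2} I would instead compute with the monomial expansion of Proposition \ref{prop:expM} together with the bijection of Proposition \ref{prop:sstp_ordsum}. Taking cardinalities in that bijection gives $|\St^{<}_{\gamma}(P \oplus Q)| = \sum_{\alpha \ast \beta = \gamma} |\St^{<}_{\alpha}(P)|\,|\St^{<}_{\beta}(Q)|$ for every composition $\gamma$. Substituting this into $\Gamma^{<}(P \oplus Q, \boldsymbol{x}) = \sum_{\gamma} |\St^{<}_{\gamma}(P \oplus Q)| M_{\gamma}$ and regrouping the resulting double sum over pairs $(\alpha,\beta)$, I would invoke the defining relation $M_{\alpha} \ast M_{\beta} = M_{\alpha \ast \beta}$ of the concatenation product on $\QSym$ to recognize the expression as $\big(\sum_{\alpha} |\St^{<}_{\alpha}(P)| M_{\alpha}\big) \ast \big(\sum_{\beta} |\St^{<}_{\beta}(Q)| M_{\beta}\big) = \Gamma^{<}(P, \boldsymbol{x}) \ast \Gamma^{<}(Q, \boldsymbol{x})$.

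The genuine content of part \eqref{lem:prod2} is entirely contained in Proposition \ref{prop:sstp_ordsum}, so the only remaining work is bookkeeping: interchanging the order of summation and matching the index $\gamma = \alpha \ast \beta$. The one subtlety, already dispatched by Proposition \ref{prop:sstp_ordsum}, is that every stable ordered partition of $P \oplus Q$ splits as a concatenation of a stable ordered partition of $P$ followed by one of $Q$, with no block straddling the two pieces — this is forced because each element of $P$ lies strictly below each element of $Q$. I expect this to be the main conceptual obstacle, but since it is isolated in the earlier proposition, the present proof reduces to routine algebra. For part \eqref{lem:prod1} the opposite phenomenon occurs, namely that blocks may freely mix elements of $P$ and $Q$, which is precisely why the disjoint union corresponds to the (overlapping shuffle) product rather than to concatenation; the homomorphism argument sidesteps this cleanly and makes a stable-ordered-partition bijection unnecessary.
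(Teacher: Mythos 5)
Your proof is correct and, for part (b), follows exactly the paper's route: expand via Proposition \ref{prop:expM}, take cardinalities in the bijection of Proposition \ref{prop:sstp_ordsum}, and regroup using $M_{\alpha}\ast M_{\beta}=M_{\alpha\ast\beta}$. For part (a) the paper simply cites Malvenuto and McNamara--Ward, whereas you supply the standard direct argument (restriction gives a bijection $\Hom^{<}(P\sqcup Q,\mathbb{N})\cong\Hom^{<}(P,\mathbb{N})\times\Hom^{<}(Q,\mathbb{N})$ under which the weight monomial factors), which is a valid and self-contained replacement for the citation.
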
 
\begin{proof}
\eqref{lem:prod1} is due to Malvenuto \cite[Proposition 4.6]{malvenuto1993produits} or McNamara and Ward \cite[Proposition 3.4]{mcnamaca2014equality}. 

\eqref{lem:prod2} By Proposition \ref{prop:expM} and Proposition \ref{prop:sstp_ordsum}, we have that 
\begin{align*}
\Gamma^{<}(P\oplus Q, \boldsymbol{x})
&= \sum_{\gamma \in \mathbb{N}^{\ast}} |\St^<_{\gamma}(P \oplus Q)| M_{\gamma} \\
&= \sum_{\gamma \in \mathbb{N}^{\ast}} \sum_{\substack{\alpha,\beta \in \mathbb{N}^\ast \\ \alpha\ast \beta =\gamma}}|\St^<_{\alpha}(P)||\St^<_{\beta}(Q)|M_{\alpha} \ast M_{\beta} \\
&=\sum_{\alpha,\beta \in \mathbb{N}^\ast}|\St^<_{\alpha}(P)||\St^<_{\beta}(Q)|M_{\alpha} \ast M_{\beta} \\
&= \left(\sum_{\alpha \in \mathbb{N}^\ast}|\St^<_{\alpha}(P)|M_{\alpha}\right)\ast \left(\sum_{\beta \in \mathbb{N}^{\ast}}|\St^<_{\beta}(Q)|M_{\beta}\right) \\
&= \Gamma^{<}(P, \boldsymbol{x}) \ast \Gamma^{<}(Q,\boldsymbol{x}),  
\end{align*}
the conclusion. 
\end{proof}


We prove the irreducibility of strict order quasisymmetric functions, which is a crucial ingredient in the proof of the main theorem (see Theorem \ref{thm:unique_C}).

\begin{lemma}\label{lem:min-max_irred}
Suppose that a finite poset $ P $ has a unique minimal or maximal element. 
Then the strict order quasisymmetric function $ \Gamma^{<}(P,\boldsymbol{x}) $ is irreducible in $ \QSym $. 
\end{lemma}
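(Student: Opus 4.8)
The plan is to reduce the statement to the concatenation-irreducibility already proved in Lemma \ref{lem:irred} by transporting everything through the algebra isomorphism $M \colon (\mathcal{M}, \oshuffle) \to \QSym$. Since $M$ is an isomorphism of commutative algebras, $\Gamma^<(P,\boldsymbol{x})$ is irreducible in $\QSym$ if and only if its preimage $M^{-1}(\Gamma^<(P,\boldsymbol{x}))$ is irreducible in $(\mathcal{M},\oshuffle)$; moreover $M$ intertwines the two concatenation products, so it suffices to produce, inside $\mathcal{M}$, a factorization of the preimage of the shape to which Lemma \ref{lem:irred} applies.

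The crux is a structural observation. First I would treat the case where $P$ has a unique minimal element $\hat 0$. Because every element of a finite poset lies above some minimal element, uniqueness of $\hat 0$ forces $\hat 0 \leq v$ for all $v \in P$. Hence $P$ splits as an ordinal sum $P = [1] \oplus P'$, where $[1] = \{\hat 0\}$ and $P' = P \setminus \{\hat 0\}$ carries the inherited order. Applying Proposition \ref{lem:prod}\eqref{lem:prod2} together with $\Gamma^<([1],\boldsymbol{x}) = M_{(1)}$ gives the factorization
\begin{align*}
\Gamma^<(P,\boldsymbol{x}) = M_{(1)} \ast \Gamma^<(P',\boldsymbol{x})
\end{align*}
with respect to the concatenation product.

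Next I would pull this back to $\mathcal{M}$. Writing $q := M^{-1}(\Gamma^<(P',\boldsymbol{x}))$ and using that $M$ respects concatenation, we obtain $M^{-1}(\Gamma^<(P,\boldsymbol{x})) = (1) \ast q$. Here $q$ is nonzero, and by Proposition \ref{prop:lead_coeff} the function $\Gamma^<(P',\boldsymbol{x})$ is primitive; since $M$ carries the basis $\{\alpha\}$ bijectively to $\{M_\alpha\}$ it preserves coefficients, so $q$ is primitive as well. Lemma \ref{lem:irred} then shows that $(1) \ast q$ is irreducible in $(\mathcal{M},\oshuffle)$, and transporting back through $M$ yields the irreducibility of $\Gamma^<(P,\boldsymbol{x})$ in $\QSym$.

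Finally, the case of a unique maximal element $\hat 1$ is entirely symmetric: one writes $P = P'' \oplus [1]$ with $P'' = P \setminus \{\hat 1\}$, obtains $M^{-1}(\Gamma^<(P,\boldsymbol{x})) = q'' \ast (1)$ for a primitive element $q''$, and invokes the $q \ast (1)$ half of Lemma \ref{lem:irred}. I do not expect a genuine difficulty in this argument; the only points requiring care are the bookkeeping that primitivity and irreducibility pass across the isomorphism $M$, and the degenerate case $P' = \varnothing$ (that is, $P = [1]$), where $q$ is the unit $\varnothing$ and Lemma \ref{lem:irred} still applies directly to give that $M_{(1)}$ is irreducible. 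The real work of the lemma is the recognition that a unique extremal element splits off a one-point ordinal summand, thereby converting multiplicative irreducibility into the concatenation irreducibility that has already been established.
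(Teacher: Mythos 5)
Your proposal is correct and follows essentially the same route as the paper: split off the unique extremal element as a one-point ordinal summand, apply Proposition \ref{lem:prod}\eqref{lem:prod2} to get the concatenation factorization $M_{(1)} \ast \Gamma^{<}(P',\boldsymbol{x})$ (or its reverse), use Proposition \ref{prop:lead_coeff} for primitivity, and conclude by Lemma \ref{lem:irred}. The extra bookkeeping you include about transporting through the isomorphism $M$ and the degenerate case $P=[1]$ is fine but not needed beyond what the paper already records.
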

\begin{proof}
By assumption, $ P $ is of the form $ [1]\oplus P' $ or $ P' \oplus [1] $. 
By Proposition \ref{lem:prod} \eqref{lem:prod2}, we have that $ \Gamma^{<}(P,\boldsymbol{x}) $ is equal to $ M_{(1)} \ast \Gamma^{<}(P', \boldsymbol{x}) $ or $ \Gamma^{<}(P', \boldsymbol{x}) \ast M_{(1)} $, respectively. 
By Proposition \ref{prop:lead_coeff}, $ \Gamma^{<}(P',\boldsymbol{x}) $ is primitive. 
Hence Lemma \ref{lem:irred} forces $ \Gamma^{<}(P,\boldsymbol{x}) $ to be irreducible in $ \QSym $ in both cases. 
\end{proof}

\section{$ (\N,\Join) $-free posets}\label{N-tie free posets}
This section introduces a concept of $(\N,\Join)$-free posets, which is the class to be considered in our main theorem (see Theorem \ref{thm:unique_C}). 
A \textbf{full subposet} (or an \textbf{induced subposet}) of a poset $ P $ is a subset of the underlying set of $ P $ equipped with the induced order. 
Let $\N$ be the poset consisting of four elements $a,b,c,d$ endowed with the three relations $a< b > c < d$ (see Fig.\ \ref{fig:N-tie}). A poset is \textbf{$\N$-free} if it does not contain a full subposet that is isomorphic to $\N$.
\begin{remark} Some authors use the term ``$\N$-free'' with different meanings. Our use follows the book \cite{CLM12}, not \cite{NK98}.  
\end{remark}

\begin{figure}
\centering
\begin{tikzpicture}
\draw (0,0) node[v](1){};
\draw (0,1) node[v](2){};
\draw (1,0) node[v](3){};
\draw (1,1) node[v](4){};
\draw (1)--(2)--(3)--(4);
\end{tikzpicture}
\qquad
\begin{tikzpicture}
\draw (0,0) node[v](1){};
\draw (0,1) node[v](2){};
\draw (1,0) node[v](3){};
\draw (1,1) node[v](4){};
\draw (1)--(2)--(3)--(4);
\draw (1)--(4); 
\end{tikzpicture}
\caption{Hasse diagrams of $ \N $ and $ \Join $}\label{fig:N-tie}
\end{figure}

Let $ \Join $ be the poset consisting four elements $ a,b,c,d $ with the four relations $ a<b>c<d $ and $ a<d $ (see Fig.\ \ref{fig:N-tie}). 
\begin{definition}
A poset is \textbf{$ (\N,\Join) $-free} if it does not contain a full subposet that is isomorphic to $ \N $ or $ \Join $. 
\end{definition}

\begin{proposition}\label{prop:rt_ntf}
Every finite rooted tree is $ (\N,\Join) $-free. 
\end{proposition}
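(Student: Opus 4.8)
The plan is to isolate the single order-theoretic feature that characterizes rooted trees among all posets: every principal order ideal is a chain. Concretely, fix a finite rooted tree $R$ with root $r$, and recall that $u \leq v$ holds if and only if the unique path in $R$ from $v$ to $r$ passes through $u$. Thus, for a fixed vertex $v$, the set of elements below it, $\{u \in R : u \leq v\}$, consists exactly of the vertices lying on that path, and any two such vertices are comparable (the one nearer to $r$ is the smaller). I would record this as the key observation: for every $v \in R$, the principal ideal $\{u \in R : u \leq v\}$ is totally ordered.

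Next I would point out the common feature shared by the two forbidden posets. Writing their four elements as $a,b,c,d$ with $a < b > c < d$ (and, in the case of $\Join$, additionally $a < d$), in \emph{both} posets the elements $a$ and $c$ are incomparable, yet each satisfies $a < b$ and $c < b$. Hence $\N$ and $\Join$ each contain an element, namely $b$, lying strictly above two mutually incomparable elements.

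The argument then proceeds by contradiction. Suppose $R$ contained a full subposet isomorphic to $\N$ or to $\Join$, and let $a,b,c,d \in R$ denote the images of the corresponding elements under such an isomorphism. Then $a \leq b$ and $c \leq b$ in $R$, so $a$ and $c$ both belong to the principal ideal $\{u \in R : u \leq b\}$, which is a chain by the key observation; consequently $a$ and $c$ are comparable in $R$. Because the subposet is full (its order is the order induced from $R$), comparability in $R$ agrees with comparability in $\N$ (respectively $\Join$), so $a$ and $c$ would have to be comparable there as well, contradicting their incomparability. Therefore no such full subposet exists, and $R$ is $(\N,\Join)$-free.

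I do not anticipate a serious obstacle here: once the principal-ideal-is-a-chain property is extracted, the conclusion is essentially immediate, since both forbidden shapes fail precisely that property. The only step demanding a little care is ensuring that the incomparability of $a$ and $c$ genuinely transfers between the abstract poset and $R$ — and this is exactly where the fullness (induced order) of the subposet must be invoked.
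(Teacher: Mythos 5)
Your proof is correct and is essentially the same as the paper's: both arguments observe that the set of elements below a fixed vertex $b$ is the set of vertices on the unique path from $b$ to the root, hence a chain, while $\N$ and $\Join$ each contain two incomparable elements below a common upper bound. Your phrasing in terms of principal order ideals is a clean packaging of the same idea.
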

\begin{proof}
Let $ R $ be a finite rooted tree and $ r $ its root. 
Assume that $ R $ contains a full subposet which is isomorphic to $ \N $ or $ \Join $. 
Then there exist three elements $ a,b,c \in R $ such that $ a < b > c $ and the two elements $ a,c $ are incomparable. 
Let $p$ be a unique path from $ b $ to $ r $. 
The relation $ a < b > c $ implies that $p$ contains $ a $ and $ c $, which forces $ a<c $ or $ a>c $, a contradiction. 
\end{proof}

The class of $(\N,\Join)$-free posets is obviously contained in the class of $\N$-free posets. The latter class has a recursive characterization.  The class of \textbf{series-parallel posets} is the smallest class of finite posets (up to isomorphism) which contains the one-point poset $[1]$ and is closed under the disjoint union and the ordinal sum. A finite poset is $\N$-free if and only if it is series-parallel \cite[Appendix, Theorem 22]{NK98}. 

We prove below that the class of $ (\N,\Join) $-free posets has a similar characterization. We define a class $ \mathcal{C} $ as the smallest class of finite posets (up to isomorphism) which satisfies the following conditions:  
\begin{enumerate}[\rm(1)]
\item $ [1] \in \mathcal{C} $;  
\item If $ P, Q \in \mathcal{C} $ then $ P \sqcup Q \in \mathcal{C} $; 
\item If $ P \in \mathcal{C} $ then $ [1] \oplus P \in \mathcal{C}$ and $P \oplus [1] \in \mathcal{C} $. 
\end{enumerate}
These conditions, allowing repeated use of the same poset, give a recursive definition of posets in the class $\mathcal{C}$: starting from $[1]$, we build $[1] \sqcup [1]$ which is just the poset of two elements with no relation, $[1] \oplus [1]$ which is $[2]$, and so on. Two more examples are shown in Figs. \ref{fig:N-tie2} and \ref{fig:N-tie3}.  In this way we obtain all posets in $\mathcal{C}$. 

\begin{figure}
\begin{minipage}{0.5\hsize}
\begin{center}
\begin{tikzpicture}
\draw (5,0.5) node[v](5){}; 
\draw (6,0.5) node[v](6){};
\draw (5.5,1.5) node[v](7){};
\draw (5)--(7); 
\draw (6)--(7);
\node at (7.2,1) {= \quad (}; 
\draw (8,1) node[v](8){}; 
\draw (8.7,1) node[v](9){}; 
\node at (9.3,1) {) $\oplus$}; 
\draw (10,1) node[v](10){};
\end{tikzpicture}
\caption{The poset $\wedge = ([1]\sqcup[1]) \oplus [1]$}\label{fig:N-tie2}
\end{center}
\end{minipage}
\vspace{0mm}
\begin{minipage}{0.5\hsize}
\begin{center}
\begin{tikzpicture}
\draw (1,0.5) node[v](1){}; 
\draw (2,0.5) node[v](2){};
\draw (1.5,1.5) node[v](3){};
\draw (1)--(3); 
\draw (2)--(3);
\draw (0.2,0.5) node[v](0){}; 
\draw (0.2,1.5) node[v](-1){};
\draw (-1)--(0);
\node at (2.8,1) {=}; 
\draw (3.5,0.5) node[v](9){}; 
\draw (3.5,1.5) node[v](10){};
\draw (9)--(10);
\node at (4.3,1) {$\sqcup$}; 
\draw (5,0.5) node[v](5){}; 
\draw (6,0.5) node[v](6){};
\draw (5.5,1.5) node[v](7){};
\draw (5)--(7); 
\draw (6)--(7);
\end{tikzpicture}
\caption{The poset $[2] \sqcup \wedge$}\label{fig:N-tie3}
\end{center}
\end{minipage}
\end{figure}

 The main theorem of this section is the following.

\begin{theorem}\label{N-tie} Let $P$ be a finite poset. Then
$$
P\in \mathcal{C} ~\Longleftrightarrow ~\text{$P$ is $(\N,\Join)$-free}. 
$$
\end{theorem}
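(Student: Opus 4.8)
The plan is to prove both implications, using throughout the elementary fact that $(\N,\Join)$-freeness is \emph{hereditary}: every full subposet of a $(\N,\Join)$-free poset is again $(\N,\Join)$-free, since a full subposet of a full subposet is a full subposet. I would prove the forward implication by structural induction following the recursive definition of $\mathcal{C}$, and the reverse implication by induction on $|P|$, peeling off at each step either a connected component or a single extremal element.

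For ($\Rightarrow$), the base case $[1]$ contains no four-element subposet and is therefore vacuously $(\N,\Join)$-free. For the inductive step I would record two structural features of the forbidden patterns: both $\N$ and $\Join$ are \emph{connected} (their comparability graphs are a path and a $4$-cycle, respectively), and \emph{neither has a global minimum nor a global maximum} (in each, the two minimal elements are incomparable, and likewise the two maximal elements). Connectedness handles the disjoint union: any copy of $\N$ or $\Join$ inside $P \sqcup Q$, being connected, must lie entirely in $P$ or entirely in $Q$, so if $P$ and $Q$ are $(\N,\Join)$-free then so is $P \sqcup Q$. The absence of a minimum handles $[1]\oplus P$: the adjoined point is the global minimum of $[1]\oplus P$, so any copy of $\N$ or $\Join$ using it would thereby acquire a minimum, which is impossible; hence every such copy lies in $P$, and $[1]\oplus P$ inherits $(\N,\Join)$-freeness. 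The case $P\oplus[1]$ is symmetric, using the absence of a maximum.

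For ($\Leftarrow$), let $P$ be $(\N,\Join)$-free with $|P|\ge 2$. If $P$ is disconnected, write $P = P_1 \sqcup P_2$ with both parts nonempty; both are $(\N,\Join)$-free and smaller, so by induction lie in $\mathcal{C}$, whence $P\in\mathcal{C}$. If $P$ is connected, I would show it has a unique minimal or a unique maximal element. Since $P$ is in particular $\N$-free, it is series-parallel by the characterization recalled above, and being connected with at least two elements, the final step of its series-parallel construction cannot be a disjoint union; hence $P = A \oplus B$ with $A,B$ nonempty. Here the minimal elements of $P$ are exactly those of $A$ and the maximal elements of $P$ are exactly those of $B$. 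The key observation is that $\Join$ is precisely the ordinal sum of two $2$-element antichains: every minimal element of $\Join$ lies below every maximal element. Consequently, if $P$ had two minimal elements $a_1,a_2$ (necessarily in $A$, hence incomparable) and two maximal elements $b_1,b_2$ (in $B$, incomparable), then $a_i < b_j$ for all $i,j$, so $\{a_1,a_2,b_1,b_2\}$ would be a full subposet isomorphic to $\Join$, a contradiction. Therefore $P$ has a unique minimal element or a unique maximal element. In a finite poset a unique minimal element is a global minimum, so $P = [1]\oplus P'$ where $P'$ is $P$ with its minimum removed; dually a unique maximal element gives $P = P'\oplus[1]$. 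In either case $P'$ is a smaller $(\N,\Join)$-free poset, so $P'\in\mathcal{C}$ by induction, and hence $P\in\mathcal{C}$.

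The main obstacle is the connected case of ($\Leftarrow$): showing that a connected $(\N,\Join)$-free poset can always be reduced by deleting a single top or bottom element. The crucial ideas are the recognition of $\Join$ as the ordinal sum of two $2$-element antichains, and the use of the $\N$-free (equivalently series-parallel) structure to produce a genuine ordinal-sum splitting $P=A\oplus B$; once these are in place, the impossibility of a simultaneously ``wide'' bottom and ``wide'' top forces the existence of the extremal element to peel off.
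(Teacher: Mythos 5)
Your proof is correct. The forward direction is essentially the paper's argument (the paper also reduces to the observations that $\N$ and $\Join$ are connected and have no unique maximal/minimal element), but your reverse direction takes a genuinely different route at the crucial step. The paper isolates this step as a lemma -- a finite connected $(\N,\Join)$-free poset has a unique minimal or maximal element -- and proves it from scratch: it takes a shortest alternating sequence $u< p_1 > p_2 < \cdots < v$ joining a minimal element $u$ to a maximal element $v$ and uses $\N$-freeness to collapse it to $u<v$, concluding that every minimal element lies below every maximal element. You reach the same intermediate conclusion by instead invoking the cited theorem that $\N$-free posets are exactly the series-parallel posets, so that a connected $P$ with $|P|\ge 2$ splits as a nontrivial ordinal sum $A\oplus B$, whence all minimal elements (in $A$) lie below all maximal elements (in $B$); from there both arguments finish identically by exhibiting a $\Join$ from two minimal and two maximal elements. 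Your route is valid and has the merit of making transparent why $\Join$ is the right second forbidden pattern (it is precisely the ordinal sum of two $2$-element antichains), at the cost of importing the nontrivial $\N$-free/series-parallel equivalence as a black box; the paper's zigzag argument is self-contained and keeps the proof elementary. One small point worth making explicit if you keep your version: a poset may admit several series-parallel constructions, but any one of them must end in an ordinal sum when $P$ is connected with at least two elements, which is all you need.
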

To prove the theorem, we need an intermediate lemma. We say that a finite poset is \textbf{connected} if its Hasse diagram is connected. 
\begin{lemma}\label{conn}
A finite connected $(\N,\Join)$-free poset has a unique minimal or maximal element. 
\end{lemma}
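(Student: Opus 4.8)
The plan is to prove the contrapositive is awkward here, so instead I would argue directly: assuming $P$ is a finite connected $(\N,\Join)$-free poset, I want to produce either a unique minimal element or a unique maximal element. The key structural fact I would exploit is that in an $(\N,\Join)$-free poset, whenever an element $b$ covers (or sits above) two distinct incomparable elements $a,c$, the configuration $a<b>c$ is forbidden from extending downward at $a$ and $c$ in independent directions — precisely what $\N$ and $\Join$ prohibit. So the first step is to translate $(\N,\Join)$-freeness into a usable local statement: if $a<b$ and $c<b$ with $a,c$ incomparable, then $a$ and $c$ cannot both have further distinct elements below them that are incomparable to the other, i.e., the ``downward branching'' below a common upper bound is severely constrained.

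First I would suppose, toward a contradiction, that $P$ has at least two minimal elements \emph{and} at least two maximal elements, and try to locate a forbidden subposet. Because $P$ is connected, I would pick two distinct minimal elements $m_1, m_2$ and consider a path in the Hasse diagram connecting them; along such a path there must be a ``peak,'' i.e., some element $b$ with two incomparable elements below it on either side of the path. This gives the pattern $a < b > c$ with $a,c$ incomparable (the bottom of an $\N$). The heart of the argument is then to show that the presence of two distinct maximal elements, combined with connectivity, forces one of $a$ or $c$ to sit below a \emph{second} peak, producing the fourth element $d$ with $c<d$ (or the symmetric configuration), which completes a copy of $\N$; and if in addition a comparability $a<d$ is present it completes $\Join$ instead. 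Either way we reach a contradiction with $(\N,\Join)$-freeness.

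The cleanest way to organize this is probably an induction on $|P|$ using the recursive structure, but since Theorem \ref{N-tie} (which provides that structure) is proved \emph{after} this lemma, I must avoid circularity and argue purely combinatorially from the forbidden-subposet hypothesis. So I would instead induct on $|P|$ directly: remove a maximal element $v$ (or a suitable element) and analyze how the minimal/maximal elements of $P$ relate to those of $P\setminus\{v\}$, checking that connectivity is preserved in the relevant component and that reintroducing $v$ cannot create two minima and two maxima simultaneously without an $\N$ or $\Join$ appearing. The bookkeeping of connectivity under deletion is delicate, so I would prefer the direct path-based argument of the previous paragraph if it can be made rigorous.

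The hard part will be the case analysis in locating the forbidden subposet: connectivity only guarantees a Hasse-diagram path between two minimal (or two maximal) elements, and I must carefully extract from such a path an actual induced $\N$ or $\Join$ on four elements, being careful that the four chosen elements are genuinely in the claimed covering/incomparability relations and that no unwanted comparabilities collapse the configuration. I expect the crux to be handling the interaction between the ``two minima'' witness and the ``two maxima'' witness — showing they cannot be accommodated by the same local branching without forcing the fourth vertex $d$ and thereby an $\N$ (or, if an extra relation $a<d$ slips in, a $\Join$). Once that single forbidden configuration is exhibited, the contradiction is immediate and the lemma follows.
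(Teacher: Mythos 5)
Your overall strategy---assume two minimal \emph{and} two maximal elements and exhibit a forbidden induced subposet---is the right one, but the proposal never actually produces that subposet, and the route you sketch (locating a ``peak'' on a Hasse path between two minimal elements, then hoping that the second maximal element supplies the fourth vertex $d$) is exactly the step you admit you cannot yet carry out. It is a genuine gap, not mere bookkeeping: from a peak $a<b>c$ with $a,c$ incomparable, completing an $\N$ requires some $d>c$ that is \emph{incomparable to $b$}, and the mere existence of two maximal elements does not provide this (both maximal elements could lie above $b$, for instance). Your opening ``local statement'' is also slightly off: $\N$ extends the configuration $a<b>c$ \emph{upward} at $c$ (via $c<d$), not downward, so the constraint you extract is not the one you need.

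The paper's proof closes this gap with a different and cleaner two-step argument that you should compare against. First, for \emph{any} minimal $u$ and \emph{any} maximal $v$, connectivity gives a shortest zigzag $u<p_1>p_2<\cdots>p_{2k}<v$; if $k\geq 1$, minimality of the zigzag forces the four elements $u,p_1,p_2$ and the next element to be an induced $\N$, so $k=0$ and hence $u<v$. In other words, every minimal element lies below every maximal element. Second---and this is the key point your sketch underuses---once that comparability holds, two minimal elements together with two maximal elements form an induced copy of $\Join$ (all four cross-relations present, the two minima incomparable, the two maxima incomparable), giving the contradiction directly. So the forbidden configuration you ultimately need is $\Join$, not $\N$; in your sketch $\Join$ appears only as an incidental fallback, which is why the case analysis looks intractable. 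Reorganizing around the comparability claim first makes the lemma a few lines long.
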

\begin{proof}  
Let $P$ denote our poset, and consider a minimal element $u$ and a maximal element $v$. Since $P$ is connected there exists a shortest sequence $u< p_1 > p_2 < \cdots > p_{2 k} <v$ connecting $u$ and $v$, where $p_1,\dots, p_{2k}\in P$.  
Then the $\N$-freeness of $P$ forces $k=0$. This shows that $u<v$ for every minimal element $u$ and every maximal element $v$. If $P$ has two maximal elements and two minimal elements, then they form a full subposet of the form $\Join$, a contradiction. Therefore, $P$ must have a unique minimal or maximal element. 
\end{proof}

\begin{proof}[Proof of Theorem \ref{N-tie}] 
$(\Rightarrow)$ The proof is based on induction on $|P|$. If $P\in\mathcal{C}$ is not connected then $P$ is of the form $P' \sqcup P''$ for some $P',P'' \in \mathcal{C}$. 
By the induction hypothesis, $P', P''$ are $(\N,\Join)$-free, and so is $P$. If $P$ is connected then, by the definition of $\mathcal{C}$, $P$ is of the form $[1]\oplus P'$ or $P' \oplus [1]$ for some $P'\in\mathcal{C}$. We may thus assume without loss of generality that $P$ is of the form $P' \oplus [1]$ for some $P'\in\mathcal{C}$. The induction hypothesis shows that $P'$ is $(\N,\Join)$-free. This implies $P$ is $(\N,\Join)$-free too; otherwise we would find three elements $a,b,c$ in $P'$ such that the full subposet $(\{1,a,b,c\},\leq_P)$ is isomorphic to $\N$ or $\Join$, but this is a contradiction since $\N$ and $\Join$ do not have a unique maximal element. 

$(\Leftarrow)$ The proof is again based on induction on $|P|$. 
If a finite $(\N,\Join)$-free poset $P$ is not connected then $P$ is of the form $P' \sqcup P''$ for some non-empty $(\N,\Join)$-free posets $P'$ and $P''$. Induction hypothesis shows that $P'$ and $P''$ are in $\mathcal{C}$, and so is $P$. If $P$ is connected then, by Lemma \ref{conn}, we may assume without loss of generality that $P$ is of the form $P' \oplus [1]$ for some poset $P'$. Since $P'$ is a full subposet of $P$, it is $(\N,\Join)$-free and then induction hypothesis shows that $P'$ is in $\mathcal{C}$, and so is $P$.  
\end{proof}

\section{The proof of Theorem \ref{thm: main}} \label{proof of main theorem}
We prove Theorem \ref{thm: main} in a generalized form.  Recall that every rooted tree is $ (\N,\Join) $-free by Proposition \ref{prop:rt_ntf}. 
\begin{theorem}\label{thm:unique_C}
Let $ P,Q $ be $ (\N,\Join) $-free posets. 
Then the following are equivalent. 
\begin{enumerate}[\rm(1)]
\item\label{unique1} $ \Gamma^{<}(P,\boldsymbol{x}) = \Gamma^{<}(Q,\boldsymbol{x}) $. 
\item\label{unique2} $ \Gamma^{\leq}(P,\boldsymbol{x}) = \Gamma^{\leq}(Q,\boldsymbol{x}) $. 
\item\label{unique3} $ P $ and $ Q $ are isomorphic. 
\end{enumerate}
\end{theorem}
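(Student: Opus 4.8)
The plan is to dispose of the easy implications first and then concentrate on the one substantial implication, proved by induction on the cardinality of the poset. The equivalence \eqref{unique1} $\Leftrightarrow$ \eqref{unique2} is exactly Proposition~\ref{prop:st_wk}, and \eqref{unique3} $\Rightarrow$ \eqref{unique1} is immediate since isomorphic posets have identical strict order quasisymmetric functions. It therefore remains to prove \eqref{unique1} $\Rightarrow$ \eqref{unique3}. Since $\Gamma^<(P,\boldsymbol{x})$ is homogeneous of degree $|P|$, the hypothesis $\Gamma^<(P,\boldsymbol{x})=\Gamma^<(Q,\boldsymbol{x})$ forces $|P|=|Q|$, and I will induct on this common cardinality, the base case $|P|\le 1$ being clear.

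For the inductive step I would first detect and handle disconnectedness through the unique factorization property of $\QSym$ (Corollary~\ref{cor:UFD}). Writing $P$ as the disjoint union of its connected components $P_1,\dots,P_k$, Proposition~\ref{lem:prod}~\eqref{lem:prod1} gives $\Gamma^<(P,\boldsymbol{x})=\prod_{i=1}^k \Gamma^<(P_i,\boldsymbol{x})$. Each component $P_i$ is connected and $(\N,\Join)$-free, so by Lemma~\ref{conn} it has a unique minimal or maximal element, whence $\Gamma^<(P_i,\boldsymbol{x})$ is irreducible by Lemma~\ref{lem:min-max_irred}. Thus this is a factorization into irreducibles, and the same holds for $Q$ with components $Q_1,\dots,Q_l$. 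Because each factor is homogeneous of positive degree with leading coefficient $1$ (Proposition~\ref{prop:lead_coeff}) and the only units of the graded domain $\QSym$ are $\pm1$, unique factorization yields $k=l$ together with a bijection $\sigma$ satisfying $\Gamma^<(P_i,\boldsymbol{x})=\Gamma^<(Q_{\sigma(i)},\boldsymbol{x})$. If $k\ge 2$ every $P_i$ is strictly smaller than $P$, so the induction hypothesis gives $P_i\cong Q_{\sigma(i)}$ and hence $P\cong Q$. In particular $P$ is connected if and only if $\Gamma^<(P,\boldsymbol{x})$ is irreducible, so the only remaining case is that both $P$ and $Q$ are connected.

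For the connected case I would peel off a single extremal element using the ordinal-sum formula. By Lemma~\ref{conn}, $P$ has a unique minimal or a unique maximal element; a unique minimal element is automatically the bottom element, so $P=[1]\oplus P'$, and symmetrically a unique maximal element gives $P=P'\oplus[1]$, where $P'$ is the $(\N,\Join)$-free full subposet obtained by deleting that element. Proposition~\ref{prop:number_min_max} guarantees that $P$ and $Q$ have the same number of minimal and of maximal elements, so $P$ has a unique minimal (resp.\ maximal) element precisely when $Q$ does, which lets me peel from the same side of both posets. Suppose, say, both have a unique minimal element, so $P=[1]\oplus P'$ and $Q=[1]\oplus Q'$. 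Proposition~\ref{lem:prod}~\eqref{lem:prod2} turns $\Gamma^<(P,\boldsymbol{x})=\Gamma^<(Q,\boldsymbol{x})$ into $M_{(1)}\ast\Gamma^<(P',\boldsymbol{x})=M_{(1)}\ast\Gamma^<(Q',\boldsymbol{x})$. Since the concatenation $M_{(1)}\ast(-)$ merely prepends the part $1$ to every composition indexing a monomial $M_\alpha$, it is injective, so cancelling it gives $\Gamma^<(P',\boldsymbol{x})=\Gamma^<(Q',\boldsymbol{x})$; the induction hypothesis then yields $P'\cong Q'$ and hence $P\cong Q$. The case of a unique maximal element is symmetric, using right concatenation.

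The step I expect to require the most care is the coordination of the two reductions rather than any single computation: I must be sure that connectedness, and then the choice of which extremal element to remove, can be read off from $\Gamma^<$ alone and made consistently for $P$ and $Q$. This is precisely where Lemma~\ref{conn} (existence of a unique extreme in the connected $(\N,\Join)$-free setting), Proposition~\ref{prop:number_min_max} (equality of the numbers of minimal and of maximal elements), and the irreducibility Lemma~\ref{lem:min-max_irred} must be combined; the supporting facts that $\QSym$ is a unique factorization domain with trivial units and that concatenation by $M_{(1)}$ is cancellative are what make the inductive bookkeeping go through.
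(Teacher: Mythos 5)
Your proposal is correct and follows essentially the same route as the paper's proof: reduce to the strict case via Proposition \ref{prop:st_wk}, induct on $|P|$, factor over connected components using Lemma \ref{conn}, Lemma \ref{lem:min-max_irred} and unique factorization in $\QSym$, and in the connected case peel off the unique extremal element with Proposition \ref{prop:number_min_max} and cancellation of $M_{(1)}$ under concatenation. The extra details you supply (units of $\QSym$, injectivity of $M_{(1)}\ast(-)$) are exactly the points the paper leaves implicit.
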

\begin{proof}
\eqref{unique1} and \eqref{unique2} are equivalent by Proposition \ref{prop:st_wk}. The implication $\eqref{unique3} \Rightarrow \eqref{unique1} $ is trivial. 
For the implication $\eqref{unique1} \Rightarrow \eqref{unique3}$ we use induction on $ |P| $. 
When $ |P| = 1 $, the degrees of the functions $ \Gamma^{<}(P,\boldsymbol{x}) $ and $ \Gamma^{<}(Q,\boldsymbol{x}) $ are $ 1 $. 
Hence $ |Q|=1 $. 
Thus $ P $ and $ Q $ are isomorphic. 

Assume that $ |P| \geq 2 $. 
Decompose $ P $ and $ Q $ into $ P = \sqcup_{i=1}^{n}P_{i} $ and $ Q = \sqcup_{i=1}^{m}Q_{i} $, where $ P_{i}, Q_{i} $ are non-empty connected subposets, which are $ (\N, \Join) $-free since $ P_{i} $ and $ Q_{i} $ are full subposets of $ P $ and $ Q $, respectively. 
By Lemma \ref{conn}, $P_{i}$ and $Q_{i}$ have unique minimal or maximal elements, and hence Lemma \ref{lem:min-max_irred} shows that $\Gamma^{<}(P_{i},\boldsymbol{x})$ and $ \Gamma^{<}(Q_{i},\boldsymbol{x}) $ are irreducible.
We obtain from Proposition \ref{lem:prod}\eqref{lem:prod1} the identity 
\begin{align*}
\prod_{i=1}^{n}\Gamma^{<}(P_{i},\boldsymbol{x})
=\prod_{i=1}^{m}\Gamma^{<}(Q_{i},\boldsymbol{x}). 
\end{align*}
By Corollary \ref{cor:UFD} and Proposition \ref{prop:lead_coeff}, we have $ n=m $ and $ \Gamma^{<}(P_{i},\boldsymbol{x}) = \Gamma^{<}(Q_{i},\boldsymbol{x}) $ for all $ i $ after a suitable renumbering. 
When $ n \geq 2 $, we have that $ |P_{i}|, |Q_{i}| < |P| $ and hence $P_i$ and $Q_i$ are isomorphic for every $ i $ by induction hypothesis. 
Hence $ P $ and $ Q $ are also isomorphic. 
Suppose that $ n=1 $, i.e., $ P $ and $ Q $ are connected. 
By Lemma \ref{conn}, $ P $ has a unique minimal or maximal element. 
If $ P $ has a unique minimal element, then $ Q $ also has a unique minimal element by Proposition \ref{prop:number_min_max}. 
Then we may express $ P=[1]\oplus P^{\prime} $ and $ Q=[1]\oplus Q^{\prime} $ for some posets $ P^{\prime},Q^{\prime} $. 
Since $ P^{\prime}, Q^{\prime} $ are full subposets of $ P, Q $, respectively, they are also $ (\N,\Join) $-free. 
Proposition \ref{lem:prod}\eqref{lem:prod2} shows that $M_{(1)}\ast\Gamma^<(P',\boldsymbol{x})=M_{(1)}\ast\Gamma^<(Q',\boldsymbol{x})$. It is then easy to see that the left factor $M_{(1)}$ may be cancelled out, so that $\Gamma^<(P',\boldsymbol{x})=\Gamma^<(Q',\boldsymbol{x})$. By the induction hypothesis,  $ P^{\prime} $ and $ Q^{\prime} $ are isomorphic.  
Therefore $ P $ and $ Q $ are also isomorphic. 
The case in which $ P $ has a unique maximal element is similar. 
\end{proof}

\section{Open problems}\label{open problem}
McNamara and Ward \cite[Figure 8]{mcnamaca2014equality} raised two finite posets which have the same strict order quasisymmetric function. 

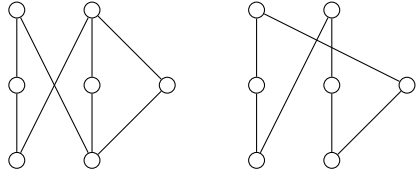
\begin{figure}[h] 
\centering
\begin{tikzpicture}
\draw (0,0) node[v](3){};
\draw (0,1) node[v](2){};
\draw (0,2) node[v](1){};
\draw (1,0) node[v](6){};
\draw (1,1) node[v](5){};
\draw (1,2) node[v](4){};
\draw (2,1) node[v](7){};
\draw (1)--(2)--(3)--(4)--(5)--(6)--(7);
\draw (1)--(6);
\draw (4)--(7);
\end{tikzpicture}
\qquad
\begin{tikzpicture}
\draw (0,0) node[v](3){};
\draw (0,1) node[v](2){};
\draw (0,2) node[v](1){};
\draw (1,0) node[v](6){};
\draw (1,1) node[v](5){};
\draw (1,2) node[v](4){};
\draw (2,1) node[v](7){};
\draw (1)--(2)--(3)--(4)--(5)--(6)--(7);
\draw (1)--(7);
\end{tikzpicture}
\caption{Two posets which have the same strict order quasisymmetric function.}\label{Poset}
\end{figure}
Using Proposition \ref{prop:expM}, one may compute the common strict order quasisymmetric function of posets in Fig.\ \ref{Poset}: 
\begin{align*}
& M_{232} + 2M_{2311} + 3M_{2221} + 3M_{2212} 
+ 9M_{22111} + M_{2131} + 3M_{2122} + 8M_{21211} \\
& + 7M_{21121} + 6M_{21112} + 20M_{211111} + M_{1321} + M_{1312} + 3M_{13111} + M_{1231}+ 3M_{1222} \\
& + 8M_{12211}+ 8M_{12121}+ 7M_{12112}+ 23M_{121111}+ 2M_{1132} + 4M_{11311} + 8M_{11221}+ 8M_{11212} \\
& + 24M_{112111} + 3M_{11131} + 9M_{11122} + 24M_{111211} + 23M_{111121} + 20M_{111112} + 66M_{1111111},  
\end{align*}
where the notation of compositions is simplified. 
We have shown that the strict order quasisymmetric functions distinguish rooted trees (Theorem \ref{thm: main}), and more generally, $(\N,\Join)$-free posets (Theorem \ref{thm:unique_C}). We propose two other natural classes, each of which does not contain the other.     
\begin{problem} \label{OT}
An \textbf{oriented tree} is a tree whose edges have orientations. An oriented tree is an acyclic digraph and hence has the natural poset structure, namely the relation $u<v$ holds if and only if there is a path from $u$ to $v$ along the orientation. Does the strict order quasisymmetric function distinguish oriented trees? How about series-parallel posets (namely, $\N$-free posets)? 
\end{problem}
The former and the latter problems generalize Theorems \ref{thm: main} and \ref{thm:unique_C} respectively. Note that the two posets in Fig.\ \ref{Poset} are neither oriented trees nor series-parallel posets, so they do not give a counterexample to the above problems.  

Another direction is to extend our result to labeled posets. 
\begin{problem}
Extend the class of $(\N,\Join)$-free posets to labeled posets and prove that $\Gamma(P,\omega, \boldsymbol{x})$ distinguishes the labeled posets in that class.  
\end{problem}

McNamara and Ward proposed a problem about irreducibility of $ (P,\omega) $-partition generating functions. 
\begin{problem}[McNamara-Ward {\cite[Question 7.2]{mcnamaca2014equality}}]
Let $ (P,\omega) $ be a connected labeled poset. 
Is $ \Gamma(P,\omega,\boldsymbol{x}) $ irreducible in $ \QSym $? 
\end{problem}
Irreducibility was very important in distinguishing finite rooted trees.  
We proved the irreducibility in the special case (Lemma \ref{lem:min-max_irred}) in which the poset has a unique minimal or maximal element and the labeling is strict. 

\begin{problem}
The sequence of the number of $ (\N,\Join) $-free posets with $n$ elements is $ 1, 2, 5, 14, 40$, $121, 373, 1184$ up to $n=8$. 
Does this sequence have any other interesting combinatorial interpretations? 
\end{problem}


Finally, we expect that our method based on algebraic properties (of the ring of quasisymmetric functions, in our case) has applications to other invariants for a variety of mathematical objects.

\section*{Acknowledgments} 
The authors thank Masahiko Yoshinaga for his interests, encouragements and comments, Peter McNamara for an interesting and fruitful suggestion that culminated in Theorem \ref{N-tie}, and referees for helpful suggestions. 

\bibliographystyle{amsplain1}
\bibliography{bibfile}

\providecommand{\bysame}{\leavevmode\hbox to3em{\hrulefill}\thinspace}
\providecommand{\MR}{\relax\ifhmode\unskip\space\fi MR }
\providecommand{\MRhref}[2]{%
  \href{http://www.ams.org/mathscinet-getitem?mr=#1}{#2}
}
\providecommand{\href}[2]{#2}
\begin{thebibliography}{10}

\bibitem{AlistePrietoZamora14}
J.~Aliste-Prieto and J.~Zamora, Proper caterpillars are distinguished by their
  chromatic symmetric function, {\em Discrete Math.\/} \textbf{315} (2014),
  158--164.

\bibitem{CLM12}
N.~Caspard, B.~Leclerc, and B.~Monjardet, \emph{Finite ordered sets},
  Encyclopedia of Mathematics and its Applications, vol. 144, Cambridge
  University Press, Cambridge, 2012, Concepts, results and uses.

\bibitem{EMJM11-1}
J.~A. Ellis-Monaghan and C.~Merino, Graph polynomials and their applications
  {I}: {T}he {T}utte polynomial, in {\em Structural analysis of complex
  networks\/}, Birkh\"auser/Springer, New York, 2011, pp.~219--255.

\bibitem{EMJM11-2}
J.~A. Ellis-Monaghan and C.~Merino, Graph polynomials and their applications
  {II}: {I}nterrelations and interpretations, in {\em Structural analysis of
  complex networks\/}, Birkh\"auser/Springer, New York, 2011, pp.~257--292.

\bibitem{gessel1984multipartite}
I.~M. Gessel, Multipartite {$P$}-partitions and inner products of skew {S}chur
  functions, in {\em Combinatorics and algebra ({B}oulder, {C}olo., 1983)\/},
  Contemp. Math., vol.~34, Amer. Math. Soc., Providence, RI, (1984),
  pp.~289--301.

\bibitem{gessel}
I.~M. Gessel, A historical survey of {$P$}-partitions, in {\em The Mathematical
  Legacy of Richard P. Stanley\/} (P.~Hersh, T.~Lam, P.~Pylyavskyy, and
  V.~Reiner, eds.), American Mathematical Society, Providence, R.I., 2016.

\bibitem{grinberg2014hopf}
D.~Grinberg and V.~Reiner, Hopf algebras in combinatorics, arXiv:1409.8356,
  2014.

\bibitem{hazewinkel1997leibniz}
M.~Hazewinkel, Leibniz--{H}opf algebra, in {\em Encyclopaedia of Mathematics:
  Supplement Vol. I\/}, Springer Netherlands, 1997, pp.~349--350.

\bibitem{hazewinkel2001algebra}
M.~Hazewinkel, The algebra of quasi-symmetric functions is free over the
  integers, {\em Adv. Math.\/} \textbf{164} (2001), no.~2, 283--300.

\bibitem{malvenuto1993produits}
C.~Malvenuto, Produits et coproduits des fonctions quasi-sym{\'e}triques et de
  l'alg{\'e}bre des descentes, Ph.D. thesis, Laboratoire de Combinatoire et
  d'Informatique Math{\'e}matique, Universit{\'e} du Qu{\'e}bec {\`a}
  Montr{\'e}al, 1993.

\bibitem{MartinMorinWagner08}
J.~L. Martin, M.~Morin, and J.~D. Wagner, On distinguishing trees by their
  chromatic symmetric functions, {\em J. Combin. Theory Ser. A\/} \textbf{115}
  (2008), no.~2, 237--253.

\bibitem{mcnamaca2014equality}
P.~R.~W. McNamara and R.~E. Ward, Equality of {$P$}-partition generating
  functions, {\em Ann. Comb.\/} \textbf{18} (2014), no.~3, 489--514.

\bibitem{NK98}
J.~Neggers and H.~S. Kim, \emph{Basic posets}, World Scientific Publishing Co.,
  Inc., River Edge, NJ, 1998.

\bibitem{NW99}
S.~D. Noble and D.~J.~A. Welsh, A weighted graph polynomial from chromatic
  invariants of knots, {\em Ann. Inst. Fourier (Grenoble)\/} \textbf{49}
  (1999), no.~3, 1057--1087, Symposium \`a la M\'emoire de Fran\c{c}ois Jaeger
  (Grenoble, 1998).

\bibitem{OrellanaScott14}
R.~Orellana and G.~Scott, Graphs with equal chromatic symmetric functions, {\em
  Discrete Math.\/} \textbf{320} (2014), 1--14.

\bibitem{Read68}
R.~C. Read, An introduction to chromatic polynomials, {\em J. Combinatorial
  Theory\/} \textbf{4} (1968), 52--71.

\bibitem{Smith:2015aa}
I.~Smith, Z.~Smith, and P.~Tian, Symmetric chromatic polynomial of trees,
  arXiv:1505.01889.

\bibitem{stanley1970chromatic-like}
R.~P. Stanley, A chromatic-like polynomial for ordered sets, in {\em Proc.
  {S}econd {C}hapel {H}ill {C}onf. on {C}ombinatorial {M}athematics and its
  {A}pplications ({U}niv. {N}orth {C}arolina, {C}hapel {H}ill, {N}.{C}.,
  1970)\/}, Univ. North Carolina, Chapel Hill, N.C., (1970), pp.~421--427.

\bibitem{stanley1972ordered}
R.~P. Stanley, \emph{Ordered structures and partitions}, American Mathematical
  Society, Providence, R.I., 1972, Memoirs of the American Mathematical
  Society, No. 119.

\bibitem{stanley1995symmetric}
R.~P. Stanley, A symmetric function generalization of the chromatic polynomial
  of a graph, {\em Adv. Math.\/} \textbf{111} (1995), no.~1, 166--194.

\end{thebibliography}

\end{document}